\def\type{\mathop{\rm Type}\nolimits}
\newtheorem{thm}{Theorem}[section]
\newtheorem{lem}[thm]{Lemma}
\newtheorem{prop}[thm]{Proposition}
\newtheorem{cor}[thm]{Corollary}
\newtheorem{rem}[thm]{Remark}
\begin{document}
\title{A note on circular wirelength for hypercubes}
\author{Qinghui Liu}
\address{Sch. Comp. Sci., Beijing Inst. Tech., Beijing 100081, China}
\email[Correponding author]{qhliu@bit.edu.cn}

\author{Zhiyi Tang}
\email{zytang@bit.edu.cn}

\begin{abstract}
We study embeddings of the $n$-dimensional hypercube into the circuit with $2^n$ vertices.
We prove that the circular wirelength attains minimum by gray coding,
which is called the CT conjecture by Chavez and Trapp (Discrete Applied Mathematics, 1998).
This problem had claimed to be settled by Ching-Jung Guu in her doctor dissertation
``The circular wirelength problem for hypercubes" (University of California, Riverside, 1997).
Many people argue there are gaps in her proof. We eliminate gaps in her dissertation.

\noindent
{\bf Keywords:} circular wirelength; hypercube; gray coding.

\noindent 2010 Mathematics Subject Classification: 05C60, 68R10.
\end{abstract}

\maketitle	

\section{Introduction}	
Graph embedding is important in parrallel algorithm, parrallel computer, or multiprocessor systems.
In 1960's, Harper \cite{H1964} and Bernstein \cite{B} get the minimum wirelength for embeddings of hypercubes into path,
which as a claim in \cite{H} is originally come from the problem of minimizing the average absolute
error in transmitting linear data over a binary symmetric channel.
In 1998, Bezrukov, Chavez, Harper, R\"{o}ttger, Schroeder (\cite{BCHRS})
get the minimum wirelength for embeddings of hypercubes into grids.
In 1998, Chavez and Trapp \cite{CT} conjecture that  the wirelength of
embedding hypercube $Q_n$ into a circuit $C_{2^n}$, is minimized by the Gray code numbering,
which is called the CT conjecture.
Guu proved in her doctor dissertation \cite{Guu}, which is directed by Harper, that the CT conjecture is correct.
Some researchers (\cite{MA,Erbele}) argue that there are gaps in her proof.
In this note, we eliminate gaps in Guu's proof.

Let $G$ and $H$ be finite undirected graphs with $n$ vertices.
$V(G)$ and $V(H)$ denote the vertex sets of $G$ and $H$ respectively.
$E(G)$ and $E(H)$ denote the edge sets of $G$ and $H$ respectively.
An embedding (\cite{BCHRS}) $\eta$ of $G$ into $H$ is defined as follows:

(i). $\eta$ is a bijective map from $V(G)$ to $V(H)$;

(ii). $\eta$ is a one-to-one map from $E(G)$ to
$$\{P_{\eta}(\eta(u),\eta(v))\ :\ P_{\eta}(\eta(u),\eta(v)) \mbox{ is a path in $H$ between $\eta(u)$ and } \eta(v)\}.$$
Note that in this paper, we always take a shortest path from $\eta(u)$ to $\eta(v)$ in $H$ to be $P_{\eta}(\eta(u),\eta(v))$.

The wirelength of an embedding $\eta$ of $G$ into $H$ is given by
$$WL(G,H;\eta)=\sum_{\{u,v\}\in E(G)}d_H(\eta(u),\eta(v)),$$
where $d_H(\eta(u),\eta(v))$ denotes the length of the path $P_\eta(\eta(u),\eta(v))$ in $H$.
Then, the minimum wirelength of $G$ into $H$ is defined as
$$WL(G,H)=\min\limits_{\eta} WL(G,H;\eta).$$

For $n>0$, the hypercube of dimension $n$, denoted by $Q_n$,
is an undirected graph. The vertex set of $Q_n$, $V(Q_n)=\{0,1\}^n$, are all $n$-tuples over two letters alphabet $\{0,1\}$.
For any $u,v\in V(Q_n)$, $\{u,v\}\in E(Q_n)$ the edge set of $Q_n$,  if and only if $u,v$ differ in exactly one coordinate.

The circuit of length $n$, denoted by $C_n$, is an undirected graph with
$$V(C_n)=\{1,2,\cdots ,n\},\quad E(C_n)=\{\{n,1\}\}\cup \{\{i,i+1\}\ :\ 1\le i< n\}.$$

We want to study the minimal wirelength of $Q_n$ into $C_{2^n}$,
which is also called the circular wirelength for hypercubes. We show that

\begin{thm}\label{key}
For any $n\ge1$,
\begin{equation*}
	WL(Q_n,C_{2^n})=WL(Q_n,C_{2^n};\xi_n)=3\cdot2^{2n-3}-2^{n-1},
\end{equation*}
where $\xi_n$ is the embedding corresponding to Gray coding of order $n$.
\end{thm}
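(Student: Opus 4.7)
I would compute $WL(Q_n, C_{2^n}; \xi_n)$ explicitly via the recursive structure of the reflected Gray code: $\xi_n$ lists the codes of $\xi_{n-1}$ prefixed by $0$ at positions $1,\dots,2^{n-1}$, followed by the reverse of $\xi_{n-1}$ prefixed by $1$ at positions $2^{n-1}+1,\dots,2^n$. Partitioning $E(Q_n)$ by the flipped coordinate, the $(n-1)\cdot 2^{n-1}$ edges that do not flip the leading bit split into two copies each confined to one half of the cycle and together contribute $2\cdot WL(Q_{n-1},P_{2^{n-1}};\xi_{n-1})$. The remaining $2^{n-1}$ edges, which flip the leading bit, connect position $i$ with position $2^n+1-i$, so their cycle distances $\min(2i-1,\,2^n+1-2i)$ sum to $2^{2n-3}$. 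Combining this with the analogous recurrence $WL(Q_n, P_{2^n}; \xi_n) = 2\cdot WL(Q_{n-1}, P_{2^{n-1}}; \xi_{n-1}) + 2^{2n-2}$ for path embeddings, which solves to $WL(Q_n,P_{2^n};\xi_n) = 2^{2n-1} - 2^{n-1}$, yields $WL(Q_n, C_{2^n}; \xi_n) = 2(2^{2n-3}-2^{n-2}) + 2^{2n-3} = 3\cdot 2^{2n-3} - 2^{n-1}$.

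\textbf{Lower bound.} I would attack this via Harper's edge-isoperimetric inequality for $Q_n$. Write $\partial_n(k)$ for the minimum edge boundary of a $k$-vertex subset of $Q_n$; in particular $\partial_n(2^{n-1}) = 2^{n-1}$, achieved only by the $(n-1)$-dimensional subcubes. A direct double count yields
\begin{equation*}
    2\cdot WL(Q_n,C_{2^n};\eta) \;=\; \sum_{A}\bigl|\partial_{Q_n}(\eta^{-1}(A))\bigr|,
\end{equation*}
where the sum runs over the $2^n$ length-$2^{n-1}$ arcs $A$ of $C_{2^n}$: each $Q_n$-edge $uv$ at cycle distance $d = d_{C_{2^n}}(\eta(u),\eta(v))$ is separated by exactly $2d$ such arcs. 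Applying Harper arc by arc gives only $WL(Q_n,C_{2^n};\eta) \ge 2^{2n-2}$, short of the target by $2^{2n-3}-2^{n-1}$.

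\textbf{Main obstacle.} The crux is recovering this missing contribution, which is where Guu's argument has been criticised as incomplete. Harper's inequality is rigid: equality $|\partial_{Q_n}(\eta^{-1}(A))| = 2^{n-1}$ forces $\eta^{-1}(A)$ to be an $(n-1)$-subcube, and any two distinct $(n-1)$-subcubes of $Q_n$ meet in either $0$ or $2^{n-2}$ vertices. This geometric rigidity limits how many consecutive arcs can simultaneously attain the Harper minimum; the other arcs must lose at least $2$ in their boundary (the next attainable value of $\partial_{Q_n}$ at size $2^{n-1}$). Quantifying this deficit and showing it sums to exactly $2\bigl(2^{2n-3}-2^{n-1}\bigr)$ is the heart of the proof. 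A natural framework is induction on $n$: peel off the leading coordinate, split the cycle into two half-arcs, and invoke the inductive hypothesis on each. The delicate step — and the point where careful accounting is required to supplant the original argument — is the coupling between the two halves: one must ensure that the sharp bounds in both inductive sub-problems can be saturated simultaneously while respecting how $\eta$ interleaves the two halves of $V(Q_n)$ along the cycle, so that cross-half edges contribute precisely the amount predicted by the target formula.
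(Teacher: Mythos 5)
Your upper bound is correct and follows a genuinely different route from the paper's: you recurse on the reflected structure of the Gray code, splitting $E(Q_n)$ by the flipped coordinate and reducing to the path wirelength $WL(Q_{n-1},P_{2^{n-1}};\xi_{n-1})=2^{2n-3}-2^{n-2}$, whereas the paper evaluates $\sum_i\theta(n,G_i)$ through the type sequence of the Gray-code partition path together with $\sum_{i=0}^{2^{n-1}}\theta(n,i)=2^{2n-2}$ (Proposition \ref{graycwl}). Your arithmetic checks out and this half of the argument is fine.

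The lower bound, however, is not a proof but a restatement of the problem. Your double-counting identity over the $2^n$ half-arcs is correct (and is essentially the paper's identity \eqref{claim6} counted twice), and Harper applied arc by arc does give $WL\ge 2^{2n-2}$; but everything after that is an unexecuted plan, and the quantitative heuristic you offer for closing the gap is off by an order of magnitude. The missing amount is $2(2^{2n-3}-2^{n-1})=2^{2n-2}-2^n$ spread over $2^n$ arcs, i.e.\ an \emph{average} excess of $2^{n-2}-1$ per arc over the Harper minimum $2^{n-1}$; your observation that a non-subcube half-set ``must lose at least $2$'' recovers only $O(2^n)$ in total, nowhere near $2^{2n-2}-2^n$. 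One genuinely needs that half-sets far from subcubes have boundary larger by a constant \emph{fraction} of $2^n$, and that is exactly what the paper's type invariant $\type(S)=\min_H|S\cap H|$ quantifies: Theorems \ref{claim15} and \ref{thm16} show $\theta(n,2^{n-1},t)\ge\theta(n,2^{n-1},2^{n-3})=\tfrac34\cdot2^n$ for all $t\ge 2^{n-3}$ (proved via the Takagi-function estimates of Section 3 plus a finite computer check), Proposition \ref{prop8} shows the type sequence moves by at most $1$ per step and must reach $2^{n-3}$ at least twice, and a rearrangement argument using Theorem \ref{thm13} then bounds $\sum_i\theta(n,2^{n-1},t_i)$ below by the Gray-code value. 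None of this machinery, nor a substitute for it, appears in your proposal. Your fallback of inducting on $n$ by ``peeling off the leading coordinate'' also does not obviously set up a valid induction: for an arbitrary embedding $\eta$ there is no distinguished coordinate, the two halves $H$ and $H^c$ need not occupy arcs of the cycle, so the two sub-problems are not instances of the circular wirelength problem for $Q_{n-1}$ and the ``coupling'' you flag as delicate is precisely the unsolved part. As written, the lower bound remains open in your argument.
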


For any $S\subset V(Q_n)$, we denote
$$\theta(n,S)=\#\{ e\in E(Q_n)\ :\ e=\{v,w\},v\in S,w \notin S  \}.$$
For each $0\le k\le 2^{n}$, we denote
$$\theta(n,k)=\min\{\theta(n,S)\ :\ S\subset V(Q_n), |S|=k\},$$
where $|A|$ is the number of elements in a finite set $A$.
What kinds of set $S\subset V(Q_n)$ with $|S|=k$ satisfies $\theta(n, S)=\theta(n,k)$,
is called the discrete isoperimetric problem for hypercubes.
It is proved in \cite{H1964,B,H} that $\theta(n, S)=\theta(n,k)$ if and only if $S$ is a cubal, which will be defined later.

It is interesting to review Harper's (\cite{H1964,B}) solution on $WL(Q_n, P_{2^n})$,
the minimal wirelength of hypercube to path. Where $P_k$ for $k>0$, is an undirected graph called path with
$$V(P_k)=\{1,\cdots,k\}, \quad E(P_k)=\{\{i,i+1\}\ :\ 1\le i< k\}.$$

Take any embedding $\eta$ of  $Q_n$ into $P_{2^n}$,
For each $1\le i<2^n$, let $S_i=\{\{i,i+1\}\}\subset E(P_{2^n})$.
For $S\subset E(P_{2^n})$, define the edge congestion (\cite{MRRM}) as
$$EC_\eta(S)=|\{\{u,v\}\in E(Q_n) \ :\  \exists e\in S, e \mbox{ is on the path } P_\eta(\eta(u), \eta(v))\}|.$$
Since, for each $1\le i<2^n$, $S_i$ is a cut of $P_{2^n}$, i.e.,
$P_{2^n}-\{\{i,i+1\}\}$ is composed of two connected components with sets of vertices $\{1,2,\cdots,i\}$ and $\{i+1,\cdots,2^n\}$, the edge congestion
$$EC_\eta(S_i) = \theta(n, \eta^{-1}(\{1,2,\cdots,i\})). $$
Since $(S_1,S_2,\cdots S_{2^n-1})$ is a partition of $E(P_{2^n})$,
$$WL(Q_n,P_{2^n};\eta)=\sum_{i=1}^{2^n-1}EC_\eta(S_i)=\sum_{i=1}^{2^n-1}\theta(n, \eta^{-1}(\{1,2,\cdots,i\})).$$
If $\eta$ is an embedding corresponding to the lexicographic order or gray coding, then for any $1\le i<2^n$, $\eta^{-1}(\{1,2,\cdots,i\})$ is a cubal.
This implies that each item of the summation in the right-side of the equation is minimized.

As for the wirelength problem of $Q_n$ into $C_{2^n}$, in case of $n>3$, people don't find a partition so that one can minimize all items.
Guu \cite{Guu} considered a trivial partition of $E(C_{2^n})$, i.e., $(S_i)_{i=1}^{2^{n-1}}$ with $S_1=\{\{2^n,1\},\{2^{n-1},2^{n-1}+1\}\}$ and, for $i>1$,
$$S_i=\{\{i-1,i\}, \{i+2^{n-1}-1,i+2^{n-1}\}\}.$$
Take any embedding $\eta$ of $Q_n$ into $C_{2^n}$.
It is clear that, for any $1\le i\le2^{n-1}$,
$$EC_\eta(S_i)=\theta(n, \eta^{-1}(\{i,i+1,\cdots,i+2^{n-1}-1\})),$$
and then
\begin{equation}\label{claim6}
WL(Q_n,C_{2^n};\eta)=\sum_{i=1}^{2^{n-1}}EC_\eta(S_i)=\sum_{i=1}^{2^{n-1}}\theta(n, \eta^{-1}(\{i,i+1,\cdots,i+2^{n-1}-1\})).
\end{equation}
In stead of minimizing each item in the summation, Guu (\cite{Guu}) want to prove that the summation as a whole is minimized by gray coding.

Given any $S\subset V(Q_n)$, Guu \cite{Guu} introduce a useful index,
\begin{equation}\label{typedef}
\type(S):=\min\limits_{H\in\mathscr{H}_n}|S\cap H|,
\end{equation}
where $\mathscr{H}_n$ is the set of $2n$ half planes of $Q_n$.
A half plane $H$ is composed of all $x_1\cdots x_n\in\{0,1\}^n$ with $x_i=a$ for some $1\le i \le n$ and $a\in\{0,1\}$.
It is clear that $0\le \type(S)\le |S|/2.$
Without causing confusing, for any $n>0$, $0\le k\le 2^n$, $0\le t\le k/2$, we denote
$$\theta(n,k,t)=\min\{\theta(n,S):\ S\subset V(Q_n),\ |S|=k,\ \type(S)=t\}.$$
Define, for any $1\le i\le 2^{n-1}$,
$$t_{\eta,i}=\type(\eta^{-1}(\{i,i+1,\cdots,i+2^{n-1}-1\})).$$
Figure \ref{gr1} shows the type sequences in case of $n=6$
for gray coding and an embedding $\eta$ present at the end of the note.
Guu \cite{Guu} show that there are at least two different $i$ with $1\le i\le 2^{n-1}$ such that $t_{\eta,i}\ge 2^{n-3}$.

\begin{figure}[htbp]
\begin{minipage}[t]{0.4\linewidth}
\centering
\includegraphics[width=2in]{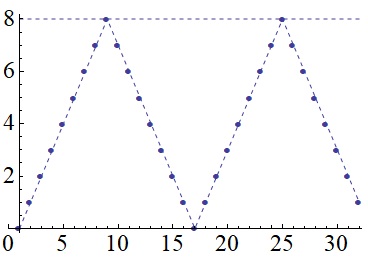}
\caption*{$(t_{\xi_n,i})_{i=1}^{2^{n-1}}$}
\end{minipage}%
\begin{minipage}[t]{0.4\linewidth}
\centering
\includegraphics[width=2in]{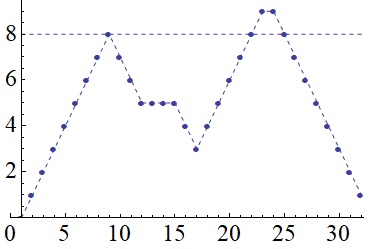}
\caption*{$(t_{\eta,i})_{i=1}^{2^{n-1}}$}
\end{minipage}%
\caption{Case $n=6$}\label{gr1}
\end{figure}
It can be proved that, for $1\le i\le 2^{n-1}$, $0\le t_{\xi_n,i}\le 2^{n-3}$ and
$$\theta(n,\xi_n^{-1}(\{i,i+1,\cdots,i+2^{n-1}-1\}))=\theta(n,2^{n-1},t_{\xi_n,i}).$$
If we can prove that, for any $t>2^{n-3}$,
\begin{equation}\label{kerineq}
\theta(n,2^{n-1},t)\ge \theta(n,2^{n-1},2^{n-3}),
\end{equation}
then Theorem \ref{key} will follow.

To prove \eqref{kerineq}, Guu \cite{Guu} define, for $x\in \mathbb{R}$,
\begin{equation}\label{f-def}
f(x)={3}/{4}-{64}/{7}(x-{1}/{2})^2,
\end{equation}
and prove the following theorems.

\begin{thm}[\cite{Guu} Claim 15]\label{claim15}
For integers $n,k,t$ with $n\ge3$, and
$$0<2t\le k\le2^{n-1},\  \frac{1}{24}\le 2^{-n}t\le\frac{1}{24}+\frac{7}{64},$$
we have
$$\theta(n,k,t)\ge 2^n f(2^{-n}k).$$
\end{thm}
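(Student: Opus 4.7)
The plan is to proceed by induction on $n$, splitting $Q_n$ along a suitably chosen coordinate and invoking the inductive hypothesis on the two resulting copies of $Q_{n-1}$.

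For the base case $n=3$, the hypothesis $\tfrac{1}{24}\le 2^{-n}t\le \tfrac{1}{24}+\tfrac{7}{64}$ at $n=3$ forces the integer $t$ to equal $1$, and the remaining constraint $2t\le k\le 2^{n-1}$ confines $k$ to $\{2,3,4\}$. Each of the three cases can be settled by a direct enumeration, up to the symmetry group of $Q_3$, of all $S\subset V(Q_3)$ with $|S|=k$ and $\type(S)=1$: the type-$1$ condition rules out precisely those low-boundary $S$ (e.g.\ the $Q_2$-subcubes) that would violate the inequality, and what remains satisfies $\theta(3,S)\ge 8 f(k/8)$ by inspection.

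For the inductive step, take $S\subseteq V(Q_n)$ with $|S|=k$ and $\type(S)=t$ in the hypothesis range. Pick a coordinate $j\in\{1,\dots,n\}$, set $S_a=S\cap H_{j,a}$ and $k_a=|S_a|$ for $a\in\{0,1\}$, and identify both half-cubes with $Q_{n-1}$ via the matching edges along $j$. The edge set of $Q_n$ then decomposes into the edges of each half plus this matching, giving
\begin{equation*}
\theta(n,S)=\theta(n-1,S_0)+\theta(n-1,S_1)+|S_0\triangle S_1|.
\end{equation*}
A short direct calculation gives the key algebraic identity
\begin{equation*}
2f\!\left(\tfrac{x_0+x_1}{2}\right)-f(x_0)-f(x_1)=\tfrac{32}{7}(x_0-x_1)^2,
\end{equation*}
which is the quantitative form of the concavity of $f$. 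Writing $\alpha_a=k_a/2^{n-1}$, the desired bound becomes
\begin{equation*}
\theta(n-1,S_0)+\theta(n-1,S_1)+|S_0\triangle S_1|\;\ge\;2^{n-1}\bigl[f(\alpha_0)+f(\alpha_1)\bigr]+\tfrac{32}{7\cdot 2^{n-1}}(k_0-k_1)^2.
\end{equation*}
Whenever the inductive hypothesis applies to both halves, the $\theta(n-1,S_a)$ terms absorb the $f(\alpha_a)$ terms, and it remains to check $|S_0\triangle S_1|\ge \tfrac{32}{7\cdot 2^{n-1}}(k_0-k_1)^2$. The trivial lower bound $|S_0\triangle S_1|\ge|k_0-k_1|$ then closes the induction provided the split is balanced in the sense $|k_0-k_1|\le\tfrac{7}{32}\cdot 2^{n-1}$, and such a $j$ can always be produced by an averaging argument over the $n$ coordinate directions.

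The main obstacle is that for a single choice of $j$, the halved types $\type(S_a)$ need not both lie in the narrow admissible window $\bigl[2^{n-1}/24,\;2^{n-1}/24+7\cdot 2^{n-1}/64\bigr]$ required by the inductive hypothesis; the sizes $k_a$ may also drift out of $[2\type(S_a),\,2^{n-2}]$. When the inductive hypothesis fails on an offending half, we must replace the inductive bound there by the unconstrained Harper--Bernstein isoperimetric inequality $\theta(n-1,S_a)\ge\theta(n-1,k_a)$ and absorb the resulting deficit using the surplus in the other half, which is deep in the admissible regime. Extra care is needed near the endpoints of $\tfrac{1}{24}\le 2^{-n}t\le\tfrac{1}{24}+\tfrac{7}{64}$, where the slack $|S_0\triangle S_1|-\tfrac{32}{7\cdot 2^{n-1}}(k_0-k_1)^2$ becomes small; there we may prefer to choose $j$ to be the coordinate realizing $\type(S)=t$, forcing $k_0=t$ exactly, and then bootstrap $\type(S_0)$ from the structure of $S$ directly rather than invoke the inductive hypothesis on that side.
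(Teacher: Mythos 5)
Your inductive set-up is essentially the paper's proof of Theorem \ref{thm16} (the big-type case $2^{-n}t>1/24+7/64$) transplanted into the small-type window, and the transplant breaks at exactly the point you flag as ``the main obstacle.'' The induction needs $\theta(n-1,S_a)\ge 2^{n-1}f(\alpha_a)$ for both halves, and the only way to extract that from the inductive hypothesis is a lower bound on $\type(S_a)$ relative to $2^{n-1}$. The available estimate is $\type(S_a)\ge(\type(S)-|S_0\triangle S_1|)/2$ (Lemma \ref{lem16.2}); with $\type(S)\ge 2^n/24$ and the balancedness $|S_0\triangle S_1|\le\tfrac{7}{64}2^n$ that your concavity step requires, this gives $\type(S_a)\ge\tfrac12(\tfrac1{24}-\tfrac7{64})2^n<0$, i.e.\ nothing. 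This is precisely why the paper restricts that argument to $2^{-n}t>1/24+7/64$: only then does the halved type stay above $\tfrac1{24}2^{n-1}$. In the Claim 15 window the halves can have relative type below $1/24$, and your fallback $\theta(n-1,S_a)\ge\theta(n-1,k_a)$ is then genuinely too weak: at $k_a=2^{n-2}$ it gives $2^{n-2}=\tfrac12\cdot 2^{n-1}$ against the required $f(1/2)\cdot 2^{n-1}=\tfrac34\cdot2^{n-1}$, a deficit of $2^{n-3}$, while the inductive hypothesis on the other half yields no quantified surplus with which to absorb it. Your alternative of splitting along the coordinate realizing the type gives $k_0=t\approx 2^n/24$ and $k_1=k-t$, so $|k_0-k_1|=k-2t$ can be as large as roughly $\tfrac5{12}2^n\gg\tfrac7{64}2^n$; the concavity absorption $|S_0\triangle S_1|\ge\tfrac{32}{7\cdot2^{n-1}}(k_0-k_1)^2$ then fails outright, and ``bootstrap $\type(S_0)$ from the structure of $S$'' is not yet an argument.

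That type-realizing split is in fact the paper's starting point, inequality \eqref{B}, $\theta(n,k,t)\ge\theta(n-1,t)+\theta(n-1,k-t)+k-2t$; but the paper then abandons $f$-concavity and instead evaluates the right-hand side exactly as $2^na_n(\hat x,\hat y)$ with $a_n(x,y)=m_n(y)+m_n(x-y)-2y$ built from partial sums of the Takagi function, and proves $a_n\ge f$ by combining piecewise linearity on dyadic squares, a finite computer verification at scale $2^{-12}$, and an exact computation ($m_j(y_j)-2y_j=\tfrac18$ and $m_j(\hat x-y_j)\ge\tfrac58$) near the troublesome lower edge $\hat y\approx 1/24$. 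Some ingredient of this kind---an exact evaluation of the isoperimetric profile rather than a one-sided bound that forgets the type---is needed in the regime your induction cannot reach, so as written the proposal has a genuine gap.
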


Guu define a sequence of function $a_n(x,y)$ (see \eqref{andef}) for any real $0\le y<x\le 1/2$.
They satisfy that, for integers $n\ge1$, $0<k<2^{n-1}$, $0<t<k/2$,
$$\theta(n,k,t)\ge 2^n a_n(2^{-n}k,2^{-n}t),$$
and for any $d\ge1$,
$$a_n(2^{-n}k,2^{-n}t)=a_{n+d}(2^{-n}k,2^{-n}t).$$
They also satisfy that, for any $n>m>1$, $0<y<x\le 1/2$,
$$a_n(x,y)\ge a_m(x,y).$$
Moreover, letting $U=[i/2^{n},(i+1)/2^{n}]\times[j/2^{n},(j+1)/2^{n}]$ with $0\le i<2^{n-1}$, $0\le j<i$ be a dyadic unit square,
the maximum of $a_n(x,y)$ for $(x,y)\in U$ happen at least one of the four corners of $U$.
Guu verified numerically that, for $0\le i\le 2^{11}$, $171\le j\le 618$,
$$a_{12}(i/2^{12},j/2^{12})\ge \max \{f(i/2^{12}), f((i+1)/2^{12})\}.$$
Then what Guu prove in \cite{Guu} is, in fact, if the integers $n,k,t$ satisfy $n\ge3$, and
$$0<2t\le k<2^{n-1},\  171/2^{12}\le 2^{-n}t\le 619/2^{12},$$
then
$$\theta(n,k,t)\ge 2^n a_n(2^{-n}k,2^{-n}t)\ge 2^n a_{12}(2^{-n}k,2^{-n}t)\ge 2^nf(2^{-n}k).$$
Since $170/2^{12}<1/24<171/2^{12}$, and $a_{12}(0.5,170/2^{12})<f(0.5)$,
Guu's method is not enough to cover Theorem \ref{claim15}.
We will divide the rectangle $[0,1/2]\times(1/24,171/2^{12})$ into three subsets to complete proof of Theorem \ref{claim15}.

\begin{thm}[\cite{Guu}Theorem 16]\label{thm16}
For any integers $n,k,t$ with $n\ge 5$ and
\begin{equation}\label{condkt}
0<2t\le k\le 2^{n-1},\quad 2^{-n}t > {1}/{24}+{7}/{64},
\end{equation}
we have
$$\theta(n,k,t)\ge 2^nf(2^{-n}k).$$
\end{thm}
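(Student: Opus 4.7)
My plan is to show $a_n(x,y) \ge f(x)$ throughout the region $R = \{(x,y):\, 0 < 2y \le x \le 1/2,\, y > 1/24+7/64\}$, which, combined with $\theta(n,k,t) \ge 2^n a_n(2^{-n}k, 2^{-n}t)$, yields the claim. I split $R$ along $y = 619/2^{12}$ into the overlap strip $R_1 = R \cap \{y \le 619/2^{12}\}$, on which Guu's numerical verification applies directly, and the cap $R_2 = R \cap \{y > 619/2^{12}\}$, which requires a new argument.

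For the strip $R_1$ I would proceed as Guu does in the range covered by Theorem \ref{claim15}: for $n \ge 12$ the monotonicity $a_n \ge a_{12}$ reduces the claim to $a_{12}(x,y) \ge f(x)$ on $R_1$, and the dyadic corner-maximum property of $a_{12}$ combined with the numerical check $a_{12}(i/2^{12},j/2^{12}) \ge \max\{f(i/2^{12}), f((i+1)/2^{12})\}$ for $171 \le j \le 618$ closes the case. For $5 \le n \le 11$ the collection of admissible pairs $(k,t)$ is finite and can be handled by direct enumeration, using Harper's classical edge-isoperimetric inequality $\theta(n,k,t) \ge \theta(n,k)$ as a lower-bound floor.

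For the cap $R_2$ I would establish a reduction: for each $(k,t)$ with $(2^{-n}k, 2^{-n}t) \in R_2$, pick $t_0 < t$ with $(2^{-n}k, 2^{-n}t_0) \in R_1$ and prove $\theta(n,k,t) \ge \theta(n,k,t_0)$. The strip bound then gives $\theta(n,k,t) \ge 2^n f(2^{-n}k)$. To produce the inequality $\theta(n,k,t) \ge \theta(n,k,t_0)$ I would use a compression-style exchange: starting from an extremal $S$ with $\type(S) = t$, pick a half-plane $H$ realising the type and replace a carefully chosen $v \in S \cap H$ by $v' \in V(Q_n) \setminus (S \cup H)$ so that $\type$ drops by one without raising the edge boundary; iterating down to type $t_0$ produces the desired witness.

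The principal obstacle will be justifying the exchange uniformly: since $\type$ is a minimum over $2n$ constraints on the indicator of $S$, a swap that safely decreases $|S \cap H|$ may violate another constraint or insert $v'$ into a dense neighbourhood, inflating the boundary. As a fallback I would recurse via $\theta(n,S) = \theta(n-1, S^0) + \theta(n-1, S^1) + |S^0 \triangle S^1|$ along a coordinate $i$ balancing the halves $S^a = \{x \in S:\, x_i = a\}$ and induct on $n$; once both halves lie in the $R_1$ regime, Theorem \ref{claim15} and the strip analysis above close the induction.
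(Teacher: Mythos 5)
Your plan has two genuine gaps, one of which is fatal as stated. First, the headline claim that $a_n(x,y)\ge f(x)$ throughout $R$ is false: at $x=1/2$, $y=1/4$ (i.e.\ $k=2^{n-1}$, $t=2^{n-2}$, which satisfies \eqref{condkt}) one has $a_n(1/2,1/4)=2m_n(1/4)-1/2=1/2<3/4=f(1/2)$. The single-cut bound \eqref{B} underlying $a_n$ simply loses too much when $t$ is large, which is precisely why the region $2^{-n}t>1/24+7/64$ needs a different argument than Theorem \ref{claim15}. Second, and more seriously, your reduction for the cap $R_2$ rests on iterating a type-lowering exchange that does not increase the edge boundary, i.e.\ on $\theta(n,k,t)\ge\theta(n,k,t-1)$. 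The paper's own Table \ref{table} refutes this: $\theta(5,16,8)=32$ while $\theta(5,16,7)=34$, so the extremal set of size $16$ and type $8$ cannot have its type lowered to $7$ without the boundary increasing. The obstacle you flag as "principal" is therefore not a technicality to be patched but a provably false step. (A further, smaller issue: the strip $R_1=\{1/24+7/64<y\le 619/2^{12}\}$ has width about $8\cdot10^{-5}$, so for moderate $n$ it contains no integer multiple of $2^{-n}$ at all, and there is no $t_0$ to reduce to unless you widen the target to the full range of Theorem \ref{claim15}.)

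Your fallback is the right idea and is essentially what the paper does, but it omits the steps that carry the proof. The paper inducts on $n$ (base case $n=5$ checked by the table) and splits along a coordinate $i$ for which the cut $|S_0\Delta S_1|=|g_{n,i,0}^{-1}(S)\Delta g_{n,i,1}^{-1}(S)|$ is at most $\frac{7}{64}2^n$. You must handle the complementary case where \emph{every} coordinate cut exceeds $\frac{7}{64}2^n$: there the multi-cut identity \eqref{multicut}, $\theta(n,S)=\sum_{i=1}^n|g_{n,i,0}^{-1}(S)\Delta g_{n,i,1}^{-1}(S)|$, gives $\theta(n,S)>\frac{7}{64}n\,2^n\ge\frac34\cdot2^n\ge 2^nf(2^{-n}k)$ for $n\ge7$ (with a separate count for $n=6$). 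In the small-cut case one needs three further ingredients you do not supply: Lemma \ref{lem16.2} to guarantee $\type(S_0),\type(S_1)\ge\frac{1}{24}2^{n-1}$ so that Theorem \ref{claim15} or the induction hypothesis applies to each half; the convexity inequality $f(x+h)+f(x-h)+2h\ge 2f(x)$ for $h\le 7/64$ (Lemma \ref{claim14}) to reassemble the two halves; and the complement trick via Lemma \ref{lem16.3} when $|S_0|>2^{n-2}$, since $f$ is only controlled on $[0,1/2]$. Without these, "once both halves lie in the $R_1$ regime" does not close the induction.
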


It is clear that Theorem \ref{claim15} and \ref{thm16} imply \eqref{kerineq}.

Some scholars (\cite{MA,Erbele}) pointed out that,
in proving of Theorem \ref{thm16}, Guu \cite{Guu} took $S=S_1 \cup S_2$, with $S_1\cap S_2\neq \emptyset$,
then she continued $|S|=|S_1|+|S_2|$, which is a contradiction.
We think what Guu meant here was to divide $S$ into two disjoint sets,
and then project them into $Q_{n-1}$ to get $S_1$ and $S_2$,
so they satisfy $S_1\cap S_2\neq \emptyset$ and $|S|=|S_1|+|S_2|$ at same time.
After we redefine the two sets, Guu's argument for the theorem does work.

Although there are some gaps, Guu's arguments in \cite{Guu} are as excellent as \cite{H1964,B}.
For completeness, we keep almost all arguments in \cite{Guu}.
In Section 2, we show some preliminaries, including cubal, partition path, type and gray code.
By the help of gray code, we can get formula of $\theta(n,2^{n-1},t)$ for $0\le t\le 2^{n-3}$.
In Section 3, we introduce Takagi functions, which will help us to define $a_n(x,y)$.
In Section 4, we prove some inequalities, and then prove Theorem \ref{claim15}, Theorem \ref{thm16} and Theorem \ref{key}.
In appendix, we list the Mathematica codes that verify the numerical calculation in proof of Theorem \ref{claim15}.

\section{preliminaries}
\subsection{Cubal}\label{dip}
Take $S \subset V(Q_n)$ with $|S|=k $, where we write
\begin{equation}\label{k-exp}
k=\sum_{i=1}^{N} 2^{c_i},\ N\ge 1,\ 0\le c_1<c_2<\cdots<c_N\le n,
\end{equation}
and $c_{N}=\left \lfloor  \log_2 k\right \rfloor$.
If $S$ is a disjoint union of  $c_i$-subcubes,$i=1,\cdots,N$,
such that each $c_i$-subcube lies in a neighbor of every $c_j$-subcube for $j<i$, then $S$ is called a \textbf{$k$-cubal}.

\begin{lem}[\cite{H}]\label{E(k)}
Let $S\subset V(Q_n)$ be a cubal with $k$ vertices, and write $k$ as \eqref{k-exp}. Then
$$\theta(n,S)=\theta(n,k)=n\cdot k-2\sum_{i=1}^{N}\left((N-i)2^{c_i}+c_i 2^{c_i-1}\right).$$
\end{lem}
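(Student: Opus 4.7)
The plan is to separate the two assertions in the lemma: the explicit formula $\theta(n,S)=nk-2\sum_{i=1}^{N}\bigl((N-i)2^{c_i}+c_i 2^{c_i-1}\bigr)$ when $S$ is a cubal, and the minimality statement $\theta(n,S)=\theta(n,k)$. The minimality is exactly Harper's discrete isoperimetric inequality for hypercubes, already cited from \cite{H1964,B,H} in the paragraph preceding the lemma, so I would simply quote it. The remaining task is to verify the closed form by a direct edge count.

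The starting identity I would use is
$$\theta(n,S)=n|S|-2|E(S)|,$$
obtained by summing the degree $n$ over each $v\in S$, with interior edges counted twice and boundary edges counted once. It therefore suffices to compute $|E(S)|$. Writing $S=C_1\sqcup\cdots\sqcup C_N$ in its cubal decomposition, I would split
$$|E(S)|=\sum_{i=1}^{N}|E(C_i)|+\sum_{i<j}|E(C_i,C_j)|.$$
Since $C_i$ is isomorphic to $Q_{c_i}$, the first sum contributes $\sum_i c_i\cdot 2^{c_i-1}$. The crucial combinatorial input is that the cubal condition forces $|E(C_i,C_j)|=2^{c_i}$ for each pair $i<j$: the smaller subcube $C_i$ sits as a $c_i$-subface of a $c_j$-subcube $C_j'$ adjacent to $C_j$, so the edges of $Q_n$ that leave $C_i$ along the coordinate direction separating $C_j'$ from $C_j$ give a perfect matching of size $2^{c_i}$ between $C_i$ and $C_j$; no other edge from $C_i$ lands in $C_j$, since the remaining $n-c_i-1$ external directions carry $C_i$ out of $C_i\cup C_j'$ entirely.

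Summing yields $\sum_{i<j}|E(C_i,C_j)|=\sum_{i=1}^{N}(N-i)\cdot 2^{c_i}$, and inserting back into $\theta(n,S)=nk-2|E(S)|$ produces the claimed formula. To keep the bookkeeping transparent I would actually proceed by induction on $N$: the base case $N=1$ is the single-subcube count $\theta(n,C_1)=(n-c_1)2^{c_1}$, and the inductive step peels off $C_N$ via
$$\theta(n,S)=\theta(n,S\setminus C_N)+\theta(n,C_N)-2\,|E(S\setminus C_N,C_N)|,$$
noting that $S\setminus C_N$ is again a cubal, with parameters $c_1<\cdots<c_{N-1}$, so the inductive hypothesis applies.

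The main obstacle I anticipate is the careful unpacking of the phrase ``lies in a neighbor of'' in the definition of cubal, which is needed to justify both the existence of the matching of size $2^{c_i}$ and the absence of extra cross-edges between $C_i$ and $C_j$. Once that combinatorial claim is nailed down, everything else is arithmetic manipulation, and the minimality part $\theta(n,S)=\theta(n,k)$ is imported wholesale from Harper's theorem.
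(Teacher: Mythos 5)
Your proposal is correct, but note that the paper does not prove this lemma at all: it is imported verbatim from Harper's book \cite{H}, so there is no in-paper argument to compare against. Your division of labour is the right one --- the minimality $\theta(n,S)=\theta(n,k)$ is the genuinely deep discrete isoperimetric inequality and must be cited, while the closed form is an elementary edge count --- and your count is correct: $\theta(n,S)=n|S|-2|E(S)|$ by degree summation, $|E(C_i)|=c_i2^{c_i-1}$, and $|E(C_i,C_j)|=2^{c_i}$ for $i<j$, giving exactly the stated formula. Two small points to tighten. First, you are silently (and correctly) repairing the paper's statement of the cubal condition, which as literally written asks the \emph{larger} subcube $C_i$ to lie in a neighbor of the smaller $C_j$ for $j<i$ --- impossible on dimension grounds; the intended condition is the one you use, namely that the smaller cube sits inside a $c_j$-subcube $C_j'$ parallel to $C_j$ across one coordinate $d_0$. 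Second, your justification that no extra edges run from $C_i$ to $C_j$ is slightly misphrased: an edge from $C_i$ in a direction internal to $C_j'$ stays inside $C_j'$ rather than leaving $C_i\cup C_j'$; the clean argument is simply that any edge not in direction $d_0$ preserves the $d_0$-coordinate and therefore cannot reach $C_j$, which differs from $C_j'$ precisely in that coordinate. With that fix both the direct computation and the induction on $N$ go through, and the base case $\theta(n,Q_{c_1})=(n-c_1)2^{c_1}$ matches the formula.
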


As a corollary, we have
\begin{equation}\label{A}
\theta(n,k)=\left\{
\begin{array}{ccc}
&\theta(n-1,k)+k \quad &\text{if} \quad 0\le k\le 2^{n-1};\\
&\theta(n,2^n-k)  \quad &\text{if}\quad 2^{n-1}\le k\le 2^{n}.
\end{array}
\right.
\end{equation}
and
\begin{equation}\label{fact1}
	\theta(n+1,2k)=2\theta(n,k),\quad 0\le k \le 2^n.
\end{equation}

\begin{lem}\label{claim12}
	$\sum_{i=0}^{2^{n-1}}\theta(n,i)=2^{2n-2}$.
\end{lem}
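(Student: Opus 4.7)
\medskip
\noindent\textbf{Proof plan for Lemma \ref{claim12}.}

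The plan is to proceed by induction on $n$, using the two recurrences for $\theta(n,k)$ collected in \eqref{A}. The base case $n=1$ is immediate, since $\theta(1,0)+\theta(1,1)=0+1=1=2^{0}$.

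For the inductive step, I would apply the first recurrence of \eqref{A}, valid on the whole summation range $0\le k\le 2^{n-1}$, to split
$$\sum_{k=0}^{2^{n-1}}\theta(n,k)=\sum_{k=0}^{2^{n-1}}\theta(n-1,k)+\sum_{k=0}^{2^{n-1}}k.$$
The arithmetic sum is immediate, equal to $2^{2n-3}+2^{n-2}$. For the other sum I would exploit the complementation symmetry $\theta(n-1,k)=\theta(n-1,2^{n-1}-k)$ (the second case of \eqref{A}): folding about the midpoint $k=2^{n-2}$ gives
$$\sum_{k=0}^{2^{n-1}}\theta(n-1,k)=2\sum_{k=0}^{2^{n-2}}\theta(n-1,k)-\theta(n-1,2^{n-2}).$$
By the induction hypothesis the first term is $2\cdot 2^{2n-4}=2^{2n-3}$, and since a half--cube $(n-2)$-subcube in $Q_{n-1}$ has exactly one outgoing edge at each of its $2^{n-2}$ vertices, $\theta(n-1,2^{n-2})=2^{n-2}$ (this also follows from Lemma \ref{E(k)} with $N=1$, $c_{1}=n-2$).

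Combining the two pieces, $\sum_{k=0}^{2^{n-1}}\theta(n-1,k)=2^{2n-3}-2^{n-2}$, and adding back the arithmetic sum yields
$$\sum_{k=0}^{2^{n-1}}\theta(n,k)=\bigl(2^{2n-3}-2^{n-2}\bigr)+\bigl(2^{2n-3}+2^{n-2}\bigr)=2^{2n-2},$$
closing the induction. The only step with any subtlety is the folding symmetry, where care must be taken not to double--count the midpoint term $\theta(n-1,2^{n-2})$; otherwise the argument is a direct, mechanical use of the recurrences in \eqref{A} and needs no input from the isoperimetric formula beyond the trivial value $\theta(n-1,2^{n-2})=2^{n-2}$.
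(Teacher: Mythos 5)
Your proof is correct and takes essentially the same route as the paper's: induction on $n$, the recurrence $\theta(n,k)=\theta(n-1,k)+k$ from \eqref{A}, and folding the sum about $k=2^{n-2}$ via the complementation symmetry while subtracting the midpoint term $\theta(n-1,2^{n-2})=2^{n-2}$. The only cosmetic difference is that you evaluate the two pieces separately before recombining, whereas the paper does the bookkeeping in a single chain of equalities.
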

\begin{proof}
	We proceed by induction on $n$. 	For $n=1$,
	\begin{equation*}
	\sum_{i=0}^{1}\theta(1,i)=\theta(1,0)+\theta(1,1)=1.
	\end{equation*}
	
	Suppose,
	\begin{equation*}
	\sum_{i=0}^{2^{n-2}}\theta(n-1,i)=2^{2n-4}.
	\end{equation*}
	Since, for $2^{n-2}<i\le2^{n-1}$, $\theta(n-1,i)=\theta(n-1,2^{n-1}-i)$, we have
	\begin{equation*}
	\begin{array}{rcl}
	\sum_{i=0}^{2^{n-1}}\theta(n,i)&=&\sum_{i=0}^{2^{n-1}}(\theta(n-1,i)+i)\vspace{1.5ex}\\
	&=&2\cdot\sum_{i=0}^{2^{n-2}}\theta(n-1,i)-\theta(n-1,2^{n-2})+\sum_{i=0}^{2^{n-1}}i\vspace{1.5ex}\\
	&=&2\cdot 2^{2n-4} -2^{n-2}+\frac{2^{n-1}(1+2^{n-1})}{2}=2^{2n-2}.
	\end{array}
	\end{equation*}
\end{proof}

\subsection{Partition path}
We define the \textbf{derived network} of the embedding problem as the vertex-weighted graph
$$D=(V(D),E(D)),$$
where $V(D)$ consists of all subsets of $V(Q_n)$ of cardinality $2^{n-1}$,
and $e\in E(D)$ with $e=\{U,W\},$ if $|U \Delta W|=2$, where $|U \Delta W|=|U\backslash W|+|W\backslash U|$.
The weight of each vertex $U\in V(D)$ is $\theta(n,U)$.

For any $U,W\in V(D)$, define
$$d_D(U,W):=|U \Delta W|.$$
It is direct to verify that $d_D$ is a metric on $V(D)$.

We call a simple path
$$F=(F_1,F_2,\cdots,F_{2^{n-1}},F_1^c)$$
a \textbf{partition path} of length $2^{n-1}$ in $D$,
if it start from any vertex set $P_1$ and end at its complement $F_1^c$,
where for a subset $S\subset V(Q_n)$, $S^c=V(Q_n)\backslash S$.	

By definition, for any partition path $F=(F_1,F_2,\cdots,F_{2^{n-1}},F_1^c)$,
$$\bigcup_{i=1}^{2^{n-1}}(F_i\backslash F_{i+1})\cup (F_{i+1}\backslash F_i)=V(Q_n),$$
where $F_{2^{n-1}+1}=F_1^c$.
Hence the partition path $F$ corresponds to an embedding $\eta_F:V(Q_n) \rightarrow V(C_{2^n})$
in the following way, for $1\le i\le 2^{n-1}$,
\begin{equation*}
\eta_F(F_i\backslash F_{i+1})=i,\ \text{and}\ \ \eta_F(F_{i+1}\backslash F_i)=i+2^{n-1},
\end{equation*}
and,
\begin{equation*}
F_i=\eta_F^{-1}\{i,i+1,\cdots, 2^{n-1}+i-1 \}.
\end{equation*}
By \eqref{claim6},
\begin{equation}\label{wl}
WL(Q_n,C_{2^n};\eta_F)
=\theta(n,F_1)+\theta(n,F_2)+\cdots+\theta(n,F_{2^{n-1}}).
\end{equation}
We say the partition path $F=(F_1,F_2,\cdots,F_{2^{n-1}},F_1^c)$ has weight $WL(Q_n,C_{2^n};\eta_F)$.

By the above analysis, partition paths and embeddings are 1-1 corresponding.
Solving the circular wirelength problem is the same as finding a partition path
in the derived network that has minimum weight.

\subsection{The type}\label{sectype}
We discuss some properties of type defined in \eqref{typedef}.
A subset $S\subset V(Q_n)$ with $|S|=2^{n-1}$ is called of small type, if $0\le \type(S)\le 2^{n-3}$;
otherwise, it is called of big type.

For a subset $S\subset V(Q_n)$ with $|S|=k$ and $\type(S)=t$. By definition, there is a half plane $H\in\mathscr{H}_n$, such that
$|S\cap H|=t$ and $|S\cap H^c|=k-t$. Let
$$A=\{\{u,v\}\in E(Q_n)\ :\ u\in S, v\not\in S \}.$$
Then $|A\cap E(Q_n)|_H|\ge\theta(n-1,t)$, $|A\cap E(Q_n)|_{H^c}|\ge\theta(n-1,k-t)$, and
$$|A\cap \{\{u,v\}\in E(Q_n)\ :\ u\in H, v\not\in H \}|\ge k-2t.$$
So, we have (see also \cite{Guu})
\begin{equation} \label{B}
\theta(n,k,t)\ge \theta(n-1,t)+\theta(n-1,k-t)+k-2t.
\end{equation}
Especially, as in \eqref{A},\ for $0\le t \le 2^{n-3}$,
\begin{equation}\label{{B}'}
\theta(n,2^{n-1},t)\ge 2\theta(n-2,t)+2^{n-1}.
\end{equation}
	
\begin{lem}\label{unique H}
Let $S\subset V(Q_n)$ with $|S|=2^{n-1}$ and $\type(S)=t$.
If  $t<2^{n-3}$, then there exists a unique half plane $H\in\mathscr{H}_n$ such that
$$|S\cap H|=t,$$
and for any other half plane $H'\in\mathscr{H}_n$,
$$|S\cap H'|\ge 2^{n-2}-t> 2^{n-3}.$$
\end{lem}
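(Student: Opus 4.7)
The plan is to reduce everything to a two-coordinate inclusion–exclusion inside $V(Q_n)$ and read off both the lower bound and the uniqueness from it.

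First I would fix a half plane $H\in\mathscr{H}_n$ that attains the minimum, so $|S\cap H|=t$ and $|S\cap H^c|=2^{n-1}-t$. The goal is then to show that every other $H'\in\mathscr{H}_n$ satisfies $|S\cap H'|\ge 2^{n-2}-t$; uniqueness will be automatic because $t<2^{n-3}$ forces $2^{n-2}-t>2^{n-3}>t$, hence $|S\cap H'|>t$ for all $H'\ne H$.

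The argument splits into two cases. If $H'=H^c$ (the half plane opposite to $H$ along the same coordinate), then $|S\cap H'|=2^{n-1}-t\ge 2^{n-2}-t$ is immediate. Otherwise $H'$ uses a different coordinate than $H$, so $H\cap H'$, $H\cap H'^c$, $H^c\cap H'$, $H^c\cap H'^c$ are the four ``quadrants'' of $V(Q_n)$, each of size $2^{n-2}$. Writing
\[
|S\cap H'|=|S\cap H\cap H'|+|S\cap H^c\cap H'|,
\]
I would discard the first term and lower-bound the second by combining the trivial bound $|S\cap H^c\cap H'^c|\le 2^{n-2}$ with the identity $|S\cap H^c|=2^{n-1}-t$. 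This gives $|S\cap H^c\cap H'|\ge(2^{n-1}-t)-2^{n-2}=2^{n-2}-t$, and hence $|S\cap H'|\ge 2^{n-2}-t$, as required.

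There is no real obstacle here: the whole statement is a short counting fact about the lattice of half planes in $Q_n$, and the only ``input'' is the hypothesis $|S|=2^{n-1}$ together with $t<2^{n-3}$, which is used precisely once at the end to turn the quantitative bound $2^{n-2}-t$ into strict dominance over $t$ and thereby deliver the uniqueness of $H$.
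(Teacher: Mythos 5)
Your proposal is correct and follows essentially the same route as the paper: fix a minimizing half plane $H$, dispose of $H^c$ directly, and for any $H'$ on a different coordinate bound $|S\cap H'|\ge |S\cap H^c\cap H'|\ge (2^{n-1}-t)-2^{n-2}=2^{n-2}-t$ using $|H^c\cap (H')^c|=2^{n-2}$, with uniqueness following from $2^{n-2}-t>2^{n-3}>t$. No issues.
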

\begin{proof}
By definition of type, there is a half plane $H$ such that
$|S\cap H|=t$. Then $$|S\cap H^c|=2^{n-1}-t>2^{n-2}.$$
Take any other half plane $H'\in\mathscr{H}_n$ with $H'\ne H$ and $H'\ne H^c$. We have
$$|S\cap H^c|=|S\cap H^c\cap H'|+|S\cap H^c\cap (H')^c|.$$
Since
$$|S\cap H^c\cap (H')^c|\le | H^c\cap (H')^c|=2^{n-2},$$
we have
$$|S\cap H^c\cap H'|=|S\cap H^c|-|S\cap H^c\cap (H')^c|\ge2^{n-1}-t-2^{n-2}=2^{n-2}-t.$$
Therefore,
$$|S\cap H'|\ge |S\cap H^c\cap H'|\ge2^{n-2}-t>2^{n-3}.$$
This proves the lemma.
\end{proof}

\begin{cor}\label{typecomp}
Let $S\subset V(Q_n)$ with $|S|=2^{n-1}$.
If there exists a half plane $H\in\mathscr{H}_n$ such that
$|S\cap H|\le 2^{n-3},$ then $\type(S)=|S\cap H|$.
\end{cor}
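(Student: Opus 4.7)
The plan is to deduce this corollary directly from Lemma \ref{unique H} by a short case analysis on $t := \type(S)$. Since $t$ is defined as the minimum of $|S \cap H'|$ over all half planes $H' \in \mathscr{H}_n$, the hypothesis $|S \cap H| \le 2^{n-3}$ immediately gives $t \le |S \cap H| \le 2^{n-3}$, so only two cases arise: $t < 2^{n-3}$ and $t = 2^{n-3}$.

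In the case $t = 2^{n-3}$, the conclusion is essentially automatic: we have $t \le |S \cap H|$ from the definition of type, while $|S \cap H| \le 2^{n-3} = t$ by hypothesis, so $|S \cap H| = t = \type(S)$. No lemma is needed here.

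In the case $t < 2^{n-3}$, I would invoke Lemma \ref{unique H}. That lemma produces a unique half plane $H^* \in \mathscr{H}_n$ realizing $|S \cap H^*| = t$, and guarantees that every other half plane $H' \in \mathscr{H}_n$ satisfies $|S \cap H'| \ge 2^{n-2} - t > 2^{n-3}$. Since our given $H$ satisfies $|S \cap H| \le 2^{n-3}$, it cannot be one of those other half planes, so $H = H^*$, and therefore $|S \cap H| = t = \type(S)$.

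There is no real obstacle: the corollary is essentially a re-packaging of Lemma \ref{unique H}, phrased as a sufficient condition for identifying the minimizing half plane. The only subtlety worth flagging is that the statement of Lemma \ref{unique H} assumes the strict inequality $t < 2^{n-3}$, which is why the boundary case $t = 2^{n-3}$ must be handled separately by the trivial min-inequality argument above.
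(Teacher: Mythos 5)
Your proof is correct and follows essentially the same route as the paper: both reduce the claim to Lemma \ref{unique H}, using its conclusion that every non-minimizing half plane meets $S$ in more than $2^{n-3}$ points to force $H$ to be the minimizer, and both dispose of the boundary case $2^{n-3}$ separately (you do it by the trivial min-inequality where the paper uses a one-line contradiction, but this is a cosmetic difference).
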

\begin{proof}
If $|S\cap H|<2^{n-3}$, then by Lemma \ref{unique H}, $\type(S)=|S\cap H|$.
Assume $|S\cap H|=2^{n-3}$.
Suppose $\type(S)<|S\cap H|$, then $\type(S)<2^{n-3}$.
By Lemma \ref{unique H}, $|S\cap H|>2^{n-3}$, which is a contradiction. So, we have $\type(S)=|S\cap H|$.
\end{proof}

\begin{cor}\label{common}
Assume $U,W$ are neighbors in $V(D)$ and are both of type strictly smaller than $2^{n-3}$.
There exists a unique half plane $H\in\mathscr{H}_n$ such that
$$|U\cap H|=\type(U),\quad |W\cap H|=\type(W).$$
\end{cor}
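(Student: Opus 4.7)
The plan is to apply Lemma \ref{unique H} to both $U$ and $W$ and then show that the half planes supplied by the lemma must actually be the same. Since $\type(U)<2^{n-3}$, Lemma \ref{unique H} produces a unique $H_U\in\mathscr{H}_n$ with $|U\cap H_U|=\type(U)$, and $|U\cap H'|>2^{n-3}$ for every other half plane $H'$. Similarly I get a unique $H_W$ for $W$. The whole content of the corollary then reduces to the identity $H_U=H_W$; given that, $H:=H_U$ works, and its uniqueness is inherited from the uniqueness clause of Lemma \ref{unique H}.

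To force $H_U=H_W$, I would argue by contradiction and exploit the fact that $U$ and $W$ are neighbors in $D$. Because $|U|=|W|=2^{n-1}$ and $|U\Delta W|=2$, one has $|U\setminus W|=|W\setminus U|=1$. Intersecting with any half plane $H_U$ can change the count by at most $1$ in either direction, so
$$|W\cap H_U|\le |U\cap H_U|+|W\setminus U|=\type(U)+1\le 2^{n-3},$$
using $\type(U)<2^{n-3}$ (i.e.\ $\type(U)\le 2^{n-3}-1$) in the last step.

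Now suppose $H_U\ne H_W$. Applying Lemma \ref{unique H} to $W$ with the competing half plane $H_U$ gives
$$|W\cap H_U|\ge 2^{n-2}-\type(W)>2^{n-3},$$
which contradicts the bound just derived. Hence $H_U=H_W$, and this common half plane is the $H$ claimed by the corollary.

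There is no real obstacle here: the whole argument is a direct count using $|U\Delta W|=2$ together with the strict inequality $\type(U),\type(W)<2^{n-3}$, which is exactly what makes Lemma \ref{unique H} applicable on both sides and gives the contradiction the required margin. One minor point worth stating explicitly in the write-up is that $\type(U)<2^{n-3}$ really is used as $\type(U)\le 2^{n-3}-1$ (an integer strict inequality), so that $\type(U)+1\le 2^{n-3}$, which is what one needs to clash with the strict lower bound $>2^{n-3}$ from Lemma \ref{unique H}.
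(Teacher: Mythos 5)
Your proof is correct and follows essentially the same route as the paper: both apply Lemma \ref{unique H} to $U$ and then use $|U\Delta W|=2$ to get the key bound $|W\cap H|\le\type(U)+1\le 2^{n-3}$. The only difference is cosmetic — the paper finishes by citing Corollary \ref{typecomp} to conclude $|W\cap H|=\type(W)$, whereas you re-derive that step by contradiction from the uniqueness clause of Lemma \ref{unique H}, which is exactly how Corollary \ref{typecomp} is proved anyway.
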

\begin{proof}
By Lemma \ref{unique H}, there exists a half plane $H$ such that
$$|U\cap H|=\type(U).$$
Since $U$, $W$ are neighbors, i.e. $|U\Delta W|=2$, we have
$$|W\cap H|\le |U\cap H|+1\le 2^{n-3}.$$
By Corollary \ref{typecomp}, $|W\cap H|=\type(W)$.
\end{proof}

For a partition path $F=(F_1,F_2,\cdots,F_{2^{n-1}}, F_1^c)$, its \textbf{type sequence} is defined as
$$
T_F=(\type(F_1),\type(F_2),\cdots,\type(F_{2^{n-1}})).$$

\begin{prop}[\cite{Guu} Prop.8.]\label{prop8}
Let $F$ be a partition path and
$$T_F=(t_1,t_2,\cdots,t_{2^{n-1}})$$
be its type sequence. We have	
\begin{itemize}
\item[(1)] for any $1\le i<2^{n-1}$, $|t_i-t_{i+1}|\le1$;	
\item[(2)] there exist at least two $i$ with $1\le i\le2^{n-1}$ such that $t_i \ge 2^{n-3}$.
\end{itemize}
\end{prop}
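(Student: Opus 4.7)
The plan is to handle the two parts separately, both leveraging the fact that consecutive sets $F_i, F_{i+1}$ in a partition path differ by a single vertex swap.

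For (1), since $\{F_i,F_{i+1}\}$ is an edge of $D$, we have $|F_i \Delta F_{i+1}|=2$, so for every half-plane $H\in\mathscr{H}_n$, $\bigl||F_i\cap H|-|F_{i+1}\cap H|\bigr|\le 1$. Choosing a half-plane $H^*$ witnessing $\type(F_i)=t_i$, we get $t_{i+1}\le |F_{i+1}\cap H^*|\le t_i+1$; by symmetry $t_i\le t_{i+1}+1$.

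For (2), I would extend $F_i$ cyclically to all $i\in\mathbb{Z}/2^n\mathbb{Z}$ via $F_{i+2^{n-1}}:=F_i^c$, which is compatible with the circular embedding and gives $t_{i+2^{n-1}}=t_i$. The crucial identity is that if $t_i<2^{n-3}$, so Lemma \ref{unique H} supplies a unique witness $H_i$ with $|F_i\cap H_i|=t_i$, then $|F_i^c\cap H_i^c|=2^{n-1}-(2^{n-1}-t_i)=t_i$, so by uniqueness the witness for $F_{i+2^{n-1}}$ is $H_i^c$. Now suppose for contradiction that at most one $i\in\{1,\ldots,2^{n-1}\}$ satisfies $t_i\ge 2^{n-3}$; by periodicity the set of such indices in the full cycle is invariant under $i\mapsto i+2^{n-1}$, so its size is either $0$ or $2$.

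If the size is $0$, Corollary \ref{common} applied to each pair of consecutive indices forces $H_i$ to be constant over the whole cycle, contradicting $H_{1+2^{n-1}}=H_1^c$. If the size is $2$, the two exceptional indices must be $j_0$ and $j_0+2^{n-1}$, and the small-type indices form two maximal arcs on which the witness is constant---say $H^{(1)}$ on the arc containing $j_0+1$ and, by the flip identity, $(H^{(1)})^c$ on the arc containing $j_0-1$. Then $|F_{j_0+1}\cap H^{(1)}|=t_{j_0+1}\le 2^{n-3}-1$ while $|F_{j_0-1}\cap H^{(1)}|=2^{n-1}-t_{j_0-1}\ge 3\cdot 2^{n-3}+1$, so the two values differ by at least $2^{n-2}+2$. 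But $i\mapsto |F_i\cap H^{(1)}|$ changes by at most $1$ per step (again by the swap property), so the total change between $j_0-1$ and $j_0+1$ is at most $2$, impossible for $n\ge 3$.

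The hardest part is realizing that the easy half-plane flipping argument alone---the unique witness $H_i$ must flip to $H_i^c$ over a shift of $2^{n-1}$---only forces at least one large-type index per half; the second one is pinned down by the quantitative continuity argument on $|F_i\cap H^{(1)}|$, which rules out any single break point absorbing the entire size-$2^{n-1}$ flip.
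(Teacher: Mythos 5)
Your part (1) is essentially the paper's argument verbatim: pick a witness half plane for the smaller of the two types and use $|F_i\Delta F_{i+1}|=2$. Your part (2) is correct but takes a genuinely different route. The paper proves the stronger local statement that \emph{every} window of $2^{n-2}$ consecutive indices contains some $j$ with $t_j\ge 2^{n-3}$: if not, Corollary \ref{common} chained along the window produces a single half plane $H$ witnessing the type of every $F_j$ there, whence $d_D(F_j,H^c)=2|F_j\cap H|<2^{n-2}$ near both ends of the window, and the triangle inequality for $d_D$ gives $d_D(F_i,F_{i+2^{n-2}})<2^{n-1}$ --- contradicting the fact that two half-windows of the circle offset by a quarter turn satisfy $|F_i\Delta F_{i+2^{n-2}}|=2^{n-1}$. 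Applying this to the two disjoint windows starting at $i=1$ and $i=2^{n-2}$ yields the two required indices. You instead extend the path to the full cycle via $F_{i+2^{n-1}}=F_i^c$, note that the unique small-type witness flips to its complement under this shift, and rule out having at most one large-type index per half by the discrete continuity of $i\mapsto|F_i\cap H^{(1)}|$ across the (at most two) exceptional indices. Both proofs rest on the same two pillars --- uniqueness of the witness half plane when the type is below $2^{n-3}$, and the one-swap-per-step structure of the path --- but the paper's version buys sharper positional information (a large-type index recurs at least once per quarter cycle, which is what the type-sequence figures display), while yours makes the complementation symmetry that forces the witness to change somewhere more explicit. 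A minor remark: your closing ``impossible for $n\ge 3$'' is in fact impossible for all $n$, since the gap $2^{n-2}+2$ always exceeds the maximal two-step drift $2$; the hypothesis $n\ge3$ matters only to keep $2^{n-3}$ integral and the statement nondegenerate.
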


\begin{proof}
(1) We only prove $|t_1-t_2|\le1$. The others are same.
Without loose of generality, assume $t_1 \le t_2$.
Then there exists a half plane $H\in\mathscr{H}_n$ such that
$$t_1=|H\cap F_1|.$$
So,
$$|H\cap (F_1\cap F_2)|=|H\cap F_1|-|H\cap (F_1\backslash F_2)|\le t_1.$$
Due to $|F_1\Delta F_2|=2$ and $|F_1|=|F_2|$, we have
$$|F_1\backslash F_2|=|F_2\backslash F_1|=1$$
Moreover,
\begin{equation*}
t_2\le |H\cap F_2|
=|H\cap (F_1\cap F_2)|+|H\cap (F_2\backslash F_1)|
\le t_1+1,
\end{equation*}
so $t_2- t_1= 1$ \ or \ 0.
	
(2) We only need to prove that for any $1\le i\le2^{n-2}$, there exists $j$ with $i\le j<i+2^{n-2}$ such that $t_j\ge 2^{n-3}$.
Otherwise, suppose, for all $i\le j<i+2^{n-2}$, $t_j<2^{n-3}$.
By Corollary \ref{common}, there exists a half plane $H$ such that, for all  $i\le j<i+2^{n-2}$,
$$|F_j\cap H|=t_j,$$
and hence $d_D(F_j,H^c)=2|F_j\backslash H^c|=2|F_j\cap H|=2t_j$.
Since $d_D$ is a metric, we have
$$d_D(F_i,F_{i+2^{n-2}})\le d_D(F_i,H^c)+d_D(F_{i+2^{n-2}},H^c)<2^{n-1}.$$
This contradicts the fact that $d_D(F_i,F_{i+2^{n-2}})=2^{n-1}.$
\end{proof}

\subsection{The Gray code}\label{graycode}
A gray code is an ordering of $2^n$ binary numbers such that only one bit changes from one entry to the next.
It is also a Hamilton cycle on an $n$-dimensional hypercube.
Each gray code corresponds to an embedding from $Q_n$ to $C_{2^n}$.
The following embedding $\xi_n$ corresponds to reflected Gray code.
Take any $n$-tuple $x=x_1\cdots x_n\in\{0,1\}^n$.
Let $y=y_1\cdots y_n\in\{0,1\}^n$ be defined as, for any $1\le i\le n$,
$y_i=0$, if $x_1+\cdots+x_i$ is even, $y_i=1$ otherwise. Then
$$\xi_n(x)=(y)_2+1,$$
where $(y)_2$ is the integer with binary expansion $y$.

For any $1\le i\le n$, define mappings $$g_{n,i,0},g_{n,i,1}:\ \{0,1\} ^{n-1} \rightarrow \{0,1\} ^{n},$$
such that, for any $x_1 \cdots x_{n-1}\in \{0,1\} ^{n-1}$,
\begin{equation}\label{halfplane}
\begin{array}{rcl}
g_{n,i,0}(x_1 \cdots x_{n-1})&=&x_1 \cdots x_{i-1} 0 x_i \cdots x_{n-1},\\
g_{n,i,1}(x_1 \cdots x_{n-1})&=&x_1 \cdots x_{i-1} 1x_i \cdots x_{n-1}.
\end{array}
\end{equation}
For any $1\le i\le n$, define
$$\begin{array}{rcl}
H_{n,i,0}&=&\{x_1 \cdots x_i \cdots x_n \in   V(Q_n) :x_i=0 \} = g_{n,i,0}(\{0,1\}^{n-1}),\\
H_{n,i,1}&=&\{x_1 \cdots x_i \cdots x_n \in   V(Q_n) :x_i=1 \}= g_{n,i,1}(\{0,1\}^{n-1}).
\end{array}$$
Then the set of half planes can be written as
$$\mathscr{H}_n=\{H_{n,i,j}\ : \ 1\le i\le n,\ j=0,1\}.$$
By definition of $\xi_n$,
$$\begin{array}{rcl}
\xi_n(H_{n,1,0})&=&\{1,\cdots,2^{n-1}\},\\
\xi_n(H_{n,1,1})&=&\{2^{n-1}+1,\cdots,2^{n}\},\\
\xi_n(H_{n,2,0})&=&\{1,\cdots,2^{n-2},2^n-2^{n-2}+1,\cdots,2^n\},\\
\xi_n(H_{n,2,1})&=&\{2^{n-2}+1,\cdots,2^{n}-2^{n-2}\}.
\end{array}
$$

For any $1\le i\le 2^{n-1}+1$, let
$$G_i=\xi_n^{-1}\{i,i+1,\cdots,i+2^{n-1}-1\}.$$
Then $$G=(G_1,\cdots,G_{2^{n-1}},G_1^c=G_{2^{n-1}+1})$$
is a partition path in $D$ corresponding to the reflected Gray code.

Take any $0\le i\le 2^{n-3}$. It is direct that
$$\begin{array}{rcl}
|G_{i+1}\cap H_{n,1,1}|&=&|\{2^{n-1}+1,2^{n-1}+2,\cdots,2^{n-1}+i\}|=i,\\
|G_{2^{n-1}-i+1}\cap H_{n,1,0}|&=&|\{2^{n-1}-i+1,2^{n-1}-i+2,\cdots,2^{n-1}\}|=i,\\
|G_{2^{n-2}-i+1}\cap H_{n,2,0}|&=&|\{2^{n-2}-i+1,2^{n-2}-i+2,\cdots,2^{n-2}\}|=i,\\
|G_{2^{n-2}+i+1}\cap H_{n,2,1}|&=&|\{2^{n}-2^{n-2}+1,2^{n}-2^{n-2}+2,\cdots,2^{n}-2^{n-2}+i\}|=i.
\end{array}$$
By Corollary \ref{typecomp},
\begin{equation*}
\type(G_{i+1})=\type(G_{2^{n-2}-i+1})=\type(G_{2^{n-2}+i+1})=\type(G_{2^{n-1}-i+1})=i.
\end{equation*}
Especially, the type sequence of the partition path $G$(see, e.g., Figure \ref{gr-s}) is
\begin{equation}\label{typegray}
T_G=(0,1,\cdots,2^{n-3},2^{n-3}-1,\cdots,0,1,\cdots,2^{n-3},2^{n-3}-1,\cdots,1).
\end{equation}

Secondly,
for any $0\le i\le 2^{n-3}$,
due to the recurrent structure of gray code, $G_{i+1}\cap H_{n,1,1}$ is an $i$-cubal, $G_{i+1}\cap H_{n,1,0}$ is a $(2^{n-1}-i)$-cubal. By Lemma \ref{E(k)},
\begin{equation*}
\theta(n,G_{i+1}\cap H_{n,1,1})=\theta(n,i),\quad \theta(n,G_{i+1}\cap H_{n,1,0})=\theta(n,2^{n-1}-i).
\end{equation*}
By the recurrent structure of the reflected Gray code agian,
there are $i$ edges in $E(Q_n)$ between $G_{i+1}\cap H_{n,1,0}$ and $G_{i+1}\cap H_{n,1,1}$.
By \eqref{A},
\begin{equation*}
\theta(n,G_{i+1})=\theta(n,2^{n-1}-i)+\theta(n,i)-2i=2\theta(n-2,i)+2^{n-1}.\\
\end{equation*}
Combining with \eqref{{B}'},
\begin{equation*}
\theta(n,2^{n-1},i)=2\theta(n-2,i)+2^{n-1}=\theta(n,G_{i+1}).
\end{equation*}

By similar analysis, we can get for all $1\le i\le 2^{n-1}$,
\begin{equation}\label{thetagray}
\theta(n,G_i)=\theta(n,2^{n-1},\type(G_i)).
\end{equation}

\begin{rem}
	From the above analysis, we have
	\begin{equation}\label{small}
	\theta(n,2^{n-1},t)=2\theta(n-2,t)+2^{n-1},\ 0\le t \le 2^{n-3}.
	\end{equation}
	In particular,
	\begin{equation}\label{n-3}
	\theta(n,2^{n-1},2^{n-3})=\frac{3}{4}\cdot 2^n.
	\end{equation}
\end{rem}

\begin{prop}\label{graycwl}
For the Gray code embedding $\xi_n$,
$$WL(Q_n, C_{2^n};\xi_n)=3\cdot 2^{2n-3}-2^{n-1}.$$
\end{prop}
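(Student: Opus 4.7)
The plan is to compute $WL(Q_n,C_{2^n};\xi_n)$ directly from the partition-path formula, using the explicit type sequence of the Gray code together with the summation identity in Lemma \ref{claim12}.

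First I would invoke \eqref{wl} applied to the partition path $G=(G_1,\dots,G_{2^{n-1}},G_1^c)$ associated with $\xi_n$, and then use \eqref{thetagray} to rewrite each term, obtaining
\begin{equation*}
WL(Q_n,C_{2^n};\xi_n)=\sum_{i=1}^{2^{n-1}}\theta(n,G_i)=\sum_{i=1}^{2^{n-1}}\theta(n,2^{n-1},\type(G_i)).
\end{equation*}
Because the type sequence $T_G$ in \eqref{typegray} takes values only in $\{0,1,\dots,2^{n-3}\}$, every $\type(G_i)$ lies in the range where \eqref{small} applies, so each summand equals $2\theta(n-2,\type(G_i))+2^{n-1}$. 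This gives
\begin{equation*}
WL(Q_n,C_{2^n};\xi_n)=2^{2n-2}+2\sum_{i=1}^{2^{n-1}}\theta(n-2,\type(G_i)).
\end{equation*}

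Next I would read off the multiplicities from \eqref{typegray}: decomposing $T_G$ into the four monotone runs of lengths $2^{n-3}+1$, $2^{n-3}$, $2^{n-3}$, $2^{n-3}-1$, one checks that the value $0$ appears twice, the value $2^{n-3}$ appears twice, and every intermediate $k\in\{1,\dots,2^{n-3}-1\}$ appears exactly four times. Hence
\begin{equation*}
\sum_{i=1}^{2^{n-1}}\theta(n-2,\type(G_i))=2\theta(n-2,0)+4\sum_{k=1}^{2^{n-3}-1}\theta(n-2,k)+2\theta(n-2,2^{n-3}).
\end{equation*}

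Then I would evaluate the inner sum. By Lemma \ref{claim12} (applied at dimension $n-2$),
$\sum_{k=0}^{2^{n-3}}\theta(n-2,k)=2^{2n-6}$. Using $\theta(n-2,0)=0$ and the identity $\theta(n-2,2^{n-3})=2^{n-3}$ (a subcube of dimension $n-3$ has exactly $2^{n-3}$ outgoing edges, e.g., by Lemma \ref{E(k)}), the right-hand side becomes $4(2^{2n-6}-2^{n-3})+2\cdot 2^{n-3}=2^{2n-4}-2^{n-2}$. Plugging back,
\begin{equation*}
WL(Q_n,C_{2^n};\xi_n)=2^{2n-2}+2(2^{2n-4}-2^{n-2})=2^{2n-2}+2^{2n-3}-2^{n-1}=3\cdot 2^{2n-3}-2^{n-1},
\end{equation*}
which is the stated value. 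There is no real obstacle here: the only place requiring care is the multiplicity count in the type sequence, in particular making sure the two extreme values $0$ and $2^{n-3}$ appear with multiplicity two rather than four, which is what prevents a naive application of Lemma \ref{claim12} from giving the wrong constant.
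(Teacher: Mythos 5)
Your proposal is correct and follows essentially the same route as the paper: both use the partition-path formula \eqref{wl} with \eqref{thetagray}, the multiplicity count from the type sequence \eqref{typegray} (values $0$ and $2^{n-3}$ twice, intermediate values four times), the reduction \eqref{small}, and Lemma \ref{claim12}. The only cosmetic difference is that you apply \eqref{small} before the multiplicity bookkeeping while the paper does it after; the computation is the same.
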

\begin{proof}
By \eqref{claim6} and \eqref{thetagray},
$$WL(Q_n, C_{2^n};\xi_n)=\sum_{i=1}^{2^{n-1}}\theta(n,G_i)=\sum_{i=1}^{2^{n-1}}\theta(n,2^{n-1},\type(G_i)).$$
By \eqref{typegray}, \eqref{small},\eqref{n-3} and Lemma \ref{claim12},
$$\begin{array}{rcl}
WL(Q_n, C_{2^n};\xi_n)&=&4\sum_{t=0}^{2^{n-3}}\theta(n,2^{n-1},t)-2\theta(n,2^{n-1},0)-2\theta(n,2^{n-1},2^{n-3})\\
&=&8\sum_{t=0}^{2^{n-3}}\theta(n-2,t)+2^{2n-2}-2^{n-1}=3\cdot 2^{2n-3}-2^{n-1}.
\end{array}
$$
\end{proof}

\section{The Takagi function}
For $x\in [0,1]$, let
\begin{equation*}
\begin{array}{rcl}
\Delta_1(x)&=&\frac{1}{2}-|x-\frac{1}{2}|,\vspace{1.5ex}\\
\Delta_n(x)&=&\frac{1}{2}(\Delta_{n-1}(2x-[2x])),\ \forall n\ge2.
\end{array}
\end{equation*}
Note that for integers $n\ge0$, and $0\le k\le 2^n$, $\Delta_{n+1}(k/2^n)=0$.

The Takagi function \cite{takagi}, is defined as,
$$T(x):=\sum_{i=1}^{\infty}\Delta_i(x).$$

As \cite{Guu2000}, we define a function on $[0,1]$ by
\begin{equation}\label{m}
m(x):=\lim\limits_{n\rightarrow \infty}\frac{\theta(n,[2^n x])}{2^n}.
\end{equation}

\begin{lem}[\cite{Guu,Guu2000}]\label{mproperty} The function $m(x)$ satisfies,
	
{\rm (i)} $m(x)$ is symmetric about $x=1/2$, that is,\ $m(1-x)=m(x)$.

{\rm (ii)} for $x\in [0,1/2]$, $m(2x)=2(m(x)-x).$

{\rm (iii)} $m(x)=T(x)$.
\end{lem}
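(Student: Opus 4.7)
Items (i) and (ii) will follow by taking limits in the recursions for $\theta(n,k)$ from Section~\ref{dip}; item (iii) will then follow by iterating a functional equation derived from (i) and (ii).

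For (i), the identity $\theta(n,k) = \theta(n, 2^n-k)$ in \eqref{A} is an exact symmetry at the integer level. Since $\lfloor 2^n(1-x)\rfloor$ and $2^n - \lfloor 2^n x\rfloor$ differ by at most one, and $|\theta(n,k+1) - \theta(n,k)| \le n$ (a single vertex has degree $n$), the two values of $\theta$ differ by $O(n)$, which vanishes on division by $2^n$. For (ii), I would fix $x \in [0,1/2]$ and set $k_n = \lfloor 2^n x\rfloor$, so $k_n \le 2^{n-1}$ for all large $n$; the first branch of \eqref{A} gives $\theta(n,k_n) = \theta(n-1,k_n) + k_n$. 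Noting $k_n = \lfloor 2^{n-1}(2x)\rfloor$ and dividing by $2^n$, passing to the limit yields $m(x) = \tfrac12 m(2x) + x$, i.e.\ (ii).

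For (iii), I would combine (i) and (ii) into the functional equation
\begin{equation*}
m(x) = \Delta_1(x) + \tfrac12\, m(\tau(x)), \qquad x \in [0,1],
\end{equation*}
where $\tau$ denotes the tent map $\tau(x) = 2x$ on $[0,1/2]$ and $\tau(x) = 2-2x$ on $[1/2,1]$. On $[0,1/2]$ this is just (ii) together with $\Delta_1(x) = x$; on $[1/2,1]$ one invokes (i) to reduce to $1-x \in [0,1/2]$ and applies (ii). To match this with the series $\sum_k \Delta_k$, whose definition uses the doubling map $D(y) = 2y - \lfloor 2y\rfloor$, I would verify that $D(y) \in \{\tau(y),\, 1-\tau(y)\}$ for every $y \in [0,1]$, a property stable under composition by a short induction. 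Combined with the reflection symmetry $\Delta_1(1-z) = \Delta_1(z)$, this forces $\Delta_1(D^j(x)) = \Delta_1(\tau^j(x))$ for all $j \ge 0$, and hence $\Delta_k(x) = 2^{-(k-1)}\Delta_1(\tau^{k-1}(x))$. Iterating the functional equation $N$ times then gives
\begin{equation*}
m(x) = \sum_{k=1}^{N}\Delta_k(x) + \frac{1}{2^N}\, m(\tau^N(x)),
\end{equation*}
and since $m$ is bounded on $[0,1]$ (the functional equation with $\Delta_1 \le 1/2$ forces any uniform upper bound $M$ to satisfy $M \le \tfrac12 + \tfrac{M}{2}$, hence $M \le 1$), the remainder vanishes as $N \to \infty$, yielding $m(x) = T(x)$.

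\textbf{Main obstacle.} The technical heart of (iii) is reconciling the tent map iteration that arises naturally from the combinatorial recursion with the doubling map iteration built into the definition of the $\Delta_k$. Once the reflection identity $D(y) \in \{\tau(y), 1-\tau(y)\}$ is in hand and closed under composition, the series representation of $m$ as the Takagi function drops out.
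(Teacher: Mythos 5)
Your proposal is essentially sound, but note that the paper does not prove this lemma at all: it is imported verbatim from \cite{Guu,Guu2000}, so you are supplying an argument where the authors supply a citation. Your derivations of (i) and (ii) — passing the exact identities of \eqref{A} to the limit, using the Lipschitz bound $|\theta(n,k+1)-\theta(n,k)|\le n$ to absorb the floor-function discrepancies — are correct, and your reconciliation of the tent map $\tau$ with the doubling map $D$ via the invariant ``$D^j(x)\in\{\tau^j(x),\,1-\tau^j(x)\}$'' is exactly the right device for (iii); it closes under composition as you claim because $\tau(1-z)=\tau(z)$.

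Two caveats. First, like the paper, you silently assume the limit defining $m$ in \eqref{m} exists; that is fair given the lemma's phrasing, but worth flagging since everything downstream (including taking $n\to\infty$ in the recursion for (ii)) presupposes it. Second, your justification of the boundedness of $m$ is circular as written: the inequality $M\le \tfrac12+\tfrac{M}{2}$ only constrains a bound $M$ that is already known to be finite; it does not produce one. This is easily repaired without the functional equation: by Lemma \ref{E(k)}, writing $k=\sum_i 2^{c_i}$ one has $\theta(n,k)\le\sum_{i}(n-c_i)2^{c_i}\le\sum_{c=0}^{n}(n-c)2^{c}\le 2^{n+1}$, so $m\le 2$ uniformly, which is all you need for the remainder $2^{-N}m(\tau^N(x))$ to vanish. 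With that patch the telescoped identity $m(x)=\sum_{k=1}^{N}\Delta_k(x)+2^{-N}m(\tau^N(x))$ does yield $m=T$.
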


Define, for $x\in[0,1]$,
\begin{equation}\label{mn(x)}
m_n(x):=\sum_{i=1}^{n}\Delta_i(x).
\end{equation}
Note that, for each $0\le k<2^n$, $\Delta_n(x)$ is linear in the interval $[k2^{-n},(k+1)2^{-n}]$. So does $m_n(x)$.

Take $x\in A_n=\{\frac{k}{2^{n}}: k=0,1,2,\cdots, 2^{n}\}$.
On one hand, by the fact that $\Delta_{n+i}(x)=0$ for any $ i>0$,
$$T(x)=\sum_{i=1}^{n}\Delta_i(x).$$
On the other hand, by \eqref{fact1} and \eqref{m},
 $$m(x)=2^{-n}\theta(n,2^n x).$$
Combining with (iii) in Lemma \ref{mproperty},\  we have,
\begin{equation}\label{mn}
	m_n(x)=2^{-n}\theta(n,2^n x).
\end{equation}

By  Lemma \ref{mproperty}, for $x\in A_n\cap [0,0.5]$, we also have
\begin{equation}\label{m_n(x)}
	m_n(1-x)=m_n(x),\quad m_{n-1}(2x)=2(m_n(x)-x).
\end{equation}

\section{Inequalities}
\begin{lem}[\cite{Guu} Claim 11]\label{claim11}
For $2^{n-2}\le k \le 2^{n-1} ,\theta(n,k) \ge \theta(n,2^{n-2})$.
\end{lem}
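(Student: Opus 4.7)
The plan is to reduce the inequality to a clean lower bound and then iterate the recurrence (\ref{A}). First I would evaluate $\theta(n,2^{n-2})$ directly from Lemma \ref{E(k)} with $N=1$ and $c_1=n-2$, which gives
$$\theta(n,2^{n-2})=n\cdot 2^{n-2}-2(n-2)\,2^{n-3}=2^{n-1}.$$
Hence it suffices to prove $\theta(n,k)\ge 2^{n-1}$ for every $k\in[2^{n-2},2^{n-1}]$.

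Next, I would peel off one coordinate using the first clause of (\ref{A}): since $k\le 2^{n-1}$, one has $\theta(n,k)=\theta(n-1,k)+k$. Because $k\ge 2^{n-2}=2^{(n-1)-1}$, the second clause of (\ref{A}) applied at level $n-1$ lets me reflect, $\theta(n-1,k)=\theta(n-1,2^{n-1}-k)$. Setting $j:=2^{n-1}-k\in[0,2^{n-2}]$, this rewrites as
$$\theta(n,k)=\theta(n-1,j)+(2^{n-1}-j),$$
so the desired inequality $\theta(n,k)\ge 2^{n-1}$ becomes the equivalent claim $\theta(n-1,j)\ge j$ for every $j\in[0,2^{n-2}]$.

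Finally, I would obtain this remaining bound by one more application of (\ref{A}): for $j\le 2^{n-2}=2^{(n-1)-1}$,
$$\theta(n-1,j)=\theta(n-2,j)+j\ge j,$$
using only the trivial bound $\theta(n-2,j)\ge 0$, which holds because $\theta$ counts boundary edges. Substituting back closes the argument.

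I do not anticipate a genuine obstacle: the whole proof is a short chain of manipulations based on (\ref{A}) together with one direct evaluation from Lemma \ref{E(k)}. The only point requiring care is to invoke the symmetry $\theta(n-1,k)=\theta(n-1,2^{n-1}-k)$ only on the range $k\ge 2^{n-2}$, which is precisely the hypothesis of the lemma, and to apply the two recurrences at the correct indices $n-1$ and $n-2$ respectively.
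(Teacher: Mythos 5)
Your proof is correct, but it takes a different route from the paper. The paper first reduces, exactly as you do, to showing $\theta(n,k)\ge 2^{n-1}=\theta(n,2^{n-2})$ on the range $2^{n-2}\le k\le 2^{n-1}$, but it then passes to the normalized functions $m_n(x)=2^{-n}\theta(n,2^nx)$ of Section 3: it checks $m_2(1/4)=m_2(1/2)=1/2$, uses the piecewise linearity of $m_2$ to conclude $m_2\equiv 1/2$ on $[1/4,1/2]$, and adds the nonnegative tail $\sum_{i=3}^{n}\Delta_i(x)$ to get $m_n(x)\ge 1/2$. You instead stay entirely at the level of the integer recurrences \eqref{A}: peel off one coordinate, reflect at level $n-1$ (legitimately, since $k\ge 2^{n-2}$ is exactly the hypothesis needed for the second clause), and finish with $\theta(n-1,j)=\theta(n-2,j)+j\ge j$. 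Both arguments are short and rest on the same underlying facts (Lemma \ref{E(k)} and its corollary \eqref{A}); yours is more elementary and self-contained in that it never invokes the Takagi-function apparatus, while the paper's version fits into the $m_n$ framework that it reuses heavily in the later inequalities (Corollary \ref{15.2}, Lemma \ref{15.3}, Theorem \ref{claim15}). Your evaluation $\theta(n,2^{n-2})=2^{n-1}$ from Lemma \ref{E(k)} with $N=1$, $c_1=n-2$ is also correct and matches the paper's.
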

\begin{proof}
	By Lemma \ref{E(k)},
	$$\theta(n,2^{n-1})=\theta(n,2^{n-2})=2^{n-1}.$$
	By \eqref{mn}, we only need to prove, for $n\ge 2$ and $x\in[1/4,1/2]$,
	$$m_n(x)\ge 1/2.$$
	It is direct that $m_2(1/4)=m_2(1/2)=1/2$. Since $m_2(x)$ is linear in the interval $[1/4,1/2]$, $m_2(x)\equiv 1/2$ for $x\in[1/4,1/2]$.
By \eqref{mn(x)}, we have
	$$m_n(x)=m_2(x)+\sum_{i=3}^{n}\Delta_i(x)\ge 1/2.$$
\end{proof}

The following theorem is the exact form we need in the followed proof.
It is a little different from Theorem 13 of \cite{Guu}.

\begin{thm}\label{thm13}
For  $0\le k \le 2^{n-1}$,
$$(2k+1)\theta(n,k)\ge 2\sum_{i=0}^{k}\theta(n,i).$$
\end{thm}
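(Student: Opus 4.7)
The plan is to translate the inequality into a statement about the piecewise linear function $m_n$ introduced in Section~3, and then establish the continuous version by induction on $n$. Set
$$G_n(x):=x\,m_n(x)-\int_0^x m_n(t)\,dt,\qquad x\in[0,1].$$
Since $m_n$ is linear on each interval $[i/2^n,(i+1)/2^n]$ with nodal values $m_n(i/2^n)=\theta(n,i)/2^n$ by \eqref{mn}, evaluating $\int_0^{k/2^n}m_n$ by the trapezoidal formula gives the clean identity
$$(2k+1)\theta(n,k)-2\sum_{i=0}^{k}\theta(n,i)\;=\;2^{2n+1}\,G_n(k/2^n).$$
Because $k\le 2^{n-1}$ corresponds to $k/2^n\in[0,1/2]$, the theorem is equivalent to $G_n(x)\ge 0$ for all $x\in[0,1/2]$ and all $n\ge 1$.

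The engine of the induction is the self-similarity $m_n(x)=x+\tfrac12 m_{n-1}(2x)$ on $[0,1/2]$, immediate from $\Delta_1(x)=x$ there and $\Delta_i(x)=\tfrac12\Delta_{i-1}(2x)$ for $i\ge 2$. A short computation converts it into
$$G_n(x)\;=\;\tfrac{x^2}{2}+\tfrac14\,G_{n-1}(2x),\qquad x\in[0,1/2].$$
The base case $n=1$ reduces to $G_1(x)=x^2/2\ge 0$. In the inductive step for $n\ge 2$, the range $x\in[0,1/4]$ is trivial because then $2x\in[0,1/2]$, so the inductive hypothesis forces $G_{n-1}(2x)\ge 0$ and the display yields $G_n(x)\ge x^2/2\ge 0$.

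The main obstacle is the range $x\in[1/4,1/2]$, where $2x\in[1/2,1]$ falls outside the inductive region, so I would handle it by a non-inductive direct estimate exploiting symmetry. From $m_{n-1}(1-t)=m_{n-1}(t)$ and $\int_0^1 m_{n-1}=\tfrac12(1-2^{-(n-1)})$ (both easy consequences of the formulas for $\Delta_i$), one derives
$$G_{n-1}(1-y)\;=\;m_{n-1}(y)-\tfrac12\bigl(1-2^{-(n-1)}\bigr)-G_{n-1}(y),\qquad y\in[0,1/2].$$
Substituting $y=1-2x$ into the self-similarity for $G_n$ and simplifying yields
$$G_n(x)\;=\;\frac{y^2-2y}{8}\;+\;\frac{m_{n-1}(y)-G_{n-1}(y)}{4}\;+\;2^{-n-2}.$$

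The decisive observation is that $m_{n-1}(y)-G_{n-1}(y)=(1-y)m_{n-1}(y)+\int_0^y m_{n-1}$ admits a lower bound requiring no inductive hypothesis at all: since every $\Delta_i\ge 0$, one has $m_{n-1}(t)\ge\Delta_1(t)=t$ on $[0,1/2]$, hence $(1-y)m_{n-1}(y)\ge(1-y)y$ and $\int_0^y m_{n-1}\ge y^2/2$; adding these gives $m_{n-1}(y)-G_{n-1}(y)\ge y-y^2/2$. Inserting this into the previous display, the $y$-dependent terms cancel exactly and one obtains $G_n(x)\ge 2^{-n-2}>0$ on $[1/4,1/2]$. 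This closes the induction and proves the theorem.
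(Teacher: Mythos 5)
Your proof is correct, and it takes a genuinely different route from the paper's for the harder half of the range. Every step checks out: the trapezoidal identity $(2k+1)\theta(n,k)-2\sum_{i=0}^{k}\theta(n,i)=2^{2n+1}G_n(k/2^n)$ is right (using $\theta(n,0)=0$ and the piecewise linearity of $m_n$), the recursion $G_n(x)=\tfrac{x^2}{2}+\tfrac14 G_{n-1}(2x)$ is the exact continuous analogue of the paper's use of $\theta(n,k)=\theta(n-1,k)+k$ on $0\le k\le 2^{n-2}$, and your reflection identity together with the bound $m_{n-1}(t)\ge\Delta_1(t)=t$ does yield the exact cancellation giving $G_n\ge 2^{-n-2}$ on $[1/4,1/2]$. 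The divergence is in that upper half: the paper argues by contradiction, combining Lemma \ref{claim11} ($\theta(n,k)\ge\theta(n,2^{n-2})=2^{n-1}$ there) with the total $\sum_{i=0}^{2^{n-1}}\theta(n,i)=2^{2n-2}$ from Lemma \ref{claim12} to show a failure would force the total sum past $2^{2n-1}$; you instead give a direct, self-contained estimate via the symmetry $m_{n-1}(1-t)=m_{n-1}(t)$ and $\int_0^1 m_{n-1}=\tfrac12(1-2^{-(n-1)})$, needing neither of those lemmas and not even the inductive hypothesis on that range. What your approach buys is a quantitative strengthening (a positive lower bound $2^{-n-2}$ on $[1/4,1/2]$ rather than mere nonnegativity) and a statement valid for all real $x\in[0,1/2]$, at the cost of introducing the auxiliary integral functional $G_n$; the paper's version is more combinatorial and reuses lemmas it needs elsewhere anyway. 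One cosmetic nit: the theorem is literally equivalent to $G_n\ge 0$ at the dyadic points $k/2^n$ only; positivity on all of $[0,1/2]$ is a (sufficient, and in fact here also equivalent, since $G_n'(x)=x\,m_n'(x)$ makes $G_n$ monotone on each dyadic subinterval) stronger statement, which you prove.
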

\begin{proof}
We proceed by induction on $n$. For $n=1$ and $k=0,1$, it is direct that the theorem hold.
	
Take $n>1$. Suppose, for $0<m<n$ and $0\le k\le 2^{m-1},$
$$(2k+1)\theta(m,k)\ge 2\sum_{i=0}^{k}\theta(m,i).$$
	
For $0\le k \le 2^{n-2}$, we have,
\begin{equation*}
\begin{array}{rcl}
(2k+1)\theta(n,k)&=&(2k+1)(\theta(n-1,k)+k)\vspace{1.5ex}\\
&\ge& 2\sum_{i=0}^{k}\theta(n-1,i)+2\sum_{i=0}^{k}i\vspace{1.5ex}\\
&=&2\sum_{i=0}^{k} \theta(n,i),
\end{array}
\end{equation*}
where the inequality is due to induction hypothesis.
	
For $2^{n-2}\le k \le 2^{n-1}$. Suppose the theorem does not hold, i.e.,
\begin{equation}\label{suppose}
(2k+1)\theta(n,k)<2\sum_{i=0}^{k}\theta(n,i).
\end{equation}
Then
\begin{equation*}
\begin{array}{rcl}
2\sum_{i=0}^{2^{n-1}}\theta(n,i)&=&2\sum_{i=0}^{k}\theta(n,i)+2\sum_{i=k+1}^{2^{n-1}}\theta(n,i)\vspace{1.5ex}\\
&>&(2k+1)\theta(n,2^{n-1})+2(2^{n-1}-k)\theta(n,2^{n-1})\vspace{1.5ex}\\
&=&2^{2n-1}+2^{n-1}>2^{2n-1},
\end{array}
\end{equation*}
where in the first inequality, the first term is due to \eqref{suppose}, and the second term  is due to Lemma \ref{claim11}.
It contradicts Lemma \ref{claim12}.
\end{proof}

\begin{lem}[\cite{Guu}Claim 14]\label{claim14} For $0\le t \le \frac{7}{64}$ and $f(x)$ define in \eqref{f-def},
$$f(x-t)+f(x+t)+2t \ge 2f(x).$$
\end{lem}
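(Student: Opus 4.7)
The plan is to verify the inequality by direct computation, exploiting the fact that $f$ is a quadratic polynomial, so the second difference $f(x-t)+f(x+t)-2f(x)$ depends only on $t$ (not on $x$).

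First, I would substitute the definition \eqref{f-def}. Writing $u = x - 1/2$, the point evaluations become
\begin{equation*}
f(x) = \tfrac{3}{4} - \tfrac{64}{7}u^2,\quad f(x\pm t) = \tfrac{3}{4} - \tfrac{64}{7}(u\pm t)^2.
\end{equation*}
The constant terms cancel when I form $f(x-t)+f(x+t)-2f(x)$, and the cross term $\pm 2ut$ in the expansion of $(u\pm t)^2$ cancels between the two signs. What remains is
\begin{equation*}
f(x-t)+f(x+t)-2f(x) = -\tfrac{64}{7}\bigl[(u-t)^2+(u+t)^2-2u^2\bigr] = -\tfrac{128}{7}t^2.
\end{equation*}

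Next, the desired inequality $f(x-t)+f(x+t)+2t \ge 2f(x)$ is equivalent to
\begin{equation*}
2t - \tfrac{128}{7}t^2 \ge 0, \qquad \text{i.e.,}\qquad 2t\bigl(1 - \tfrac{64}{7}t\bigr) \ge 0.
\end{equation*}
For $t \ge 0$ this holds precisely when $t \le 7/64$, which is exactly the hypothesis. Hence the inequality follows.

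There is no real obstacle here: the key observation is just that $f$ is quadratic, so the second difference is independent of $x$, and the resulting one‑variable inequality in $t$ is a simple factorization. Since $x$ does not appear in the final inequality, no case analysis on the location of $x$ is needed, and the bound $t \le 7/64$ is in fact sharp for the inequality as stated.
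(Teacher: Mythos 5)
Your computation is correct and is essentially identical to the paper's own proof, which likewise reduces the claim to the identity $f(x-t)+f(x+t)+2t=2f(x)+2t\left(1-\frac{64}{7}t\right)$ and notes the right-hand correction term is nonnegative exactly for $0\le t\le 7/64$. Nothing further is needed.
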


\begin{proof} It is direct that,
$$f(x-t)+f(x+t)+2t=2f(x)+2t\left(1-\frac{64}{7}t\right).$$
Then, for $0\le t \le \frac{7}{64}$, the lemma holds.
\end{proof}

\begin{lem}\label{15.1}
For $n\ge 3,\alpha_n=\left \lceil\frac{1}{24}\cdot 2^{n} \right \rceil$, we have
$$\theta(n,\alpha_n)-2\alpha_n=2^{n-3},\quad \theta(n,2^{n-1}-\alpha_n)=5\cdot2^{n-3}.$$
\end{lem}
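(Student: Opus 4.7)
The plan is to prove the first identity $\theta(n,\alpha_n) - 2\alpha_n = 2^{n-3}$ by combining an explicit formula for $\alpha_n$ with Lemma \ref{E(k)}, and then derive the second identity from the first via \eqref{A}.

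First I would determine $\alpha_n$ in closed form. Since $2^n/24 = 2^{n-3}/3$ and $2^k\bmod 3$ equals $1$ or $2$ according as $k$ is even or odd, one obtains
$$\alpha_n=\frac{2^{n-3}+1}{3}\quad (n\ge 4\text{ even}),\qquad \alpha_n=\frac{2^{n-3}+2}{3}\quad (n\ge 3\text{ odd}).$$
In particular $\alpha_n\le 2^{n-2}$ always, and comparing the two formulas shows $\alpha_n = 2\alpha_{n-1}$ for odd $n\ge 5$. By \eqref{fact1} this gives $\theta(n,\alpha_n) = 2\theta(n-1,\alpha_{n-1})$, hence
$$\theta(n,\alpha_n) - 2\alpha_n = 2\bigl(\theta(n-1,\alpha_{n-1}) - 2\alpha_{n-1}\bigr),$$
so the identity for odd $n\ge 5$ reduces to the even case. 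The small cases $n=3,4$ I will verify by direct calculation.

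The substantive step is the even case $n=2m\ge 4$. Setting $N=m-1$, the closed form translates to the binary expansion $\alpha_n = 2^0 + \sum_{i=2}^{N} 2^{2i-3}$, so in the notation of Lemma \ref{E(k)} one has $c_1=0$ and $c_i=2i-3$ for $2\le i\le N$. The decisive simplification is that for $i\ge 2$,
$$(N-i)\,2^{c_i} + c_i\,2^{c_i-1} = 2^{2i-4}\bigl[2(N-i) + (2i-3)\bigr] = (2N-3)\cdot 4^{i-2}$$
is purely geometric in $i$. Summing the resulting geometric series and plugging back into Lemma \ref{E(k)}, the formula for $\theta(n,\alpha_n)$ collapses after routine simplification to $(5\cdot 2^{n-3}+2)/3$, from which subtracting $2\alpha_n = (2\cdot 2^{n-3}+2)/3$ gives exactly $2^{n-3}$.

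The second identity then falls out. Since $\alpha_n\le 2^{n-2}$ we have $2^{n-2}\le 2^{n-1}-\alpha_n\le 2^{n-1}$, so two uses of \eqref{A} give
$$\theta(n, 2^{n-1}-\alpha_n) = \theta(n-1, 2^{n-1}-\alpha_n) + (2^{n-1}-\alpha_n) = \theta(n-1,\alpha_n) + (2^{n-1}-\alpha_n),$$
and a third use gives $\theta(n-1,\alpha_n) = \theta(n,\alpha_n) - \alpha_n$. Combining these produces
$$\theta(n, 2^{n-1}-\alpha_n) = \bigl(\theta(n,\alpha_n) - 2\alpha_n\bigr) + 2^{n-1} = 2^{n-3} + 2^{n-1} = 5\cdot 2^{n-3}.$$
The only real obstacle is the algebraic manipulation in the even case, and that is tamed by the observation that $2(N-i)+(2i-3) = 2N-3$ is independent of $i$.
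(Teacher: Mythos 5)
Your proposal is correct and follows essentially the same route as the paper: the same closed forms for $\alpha_n$, the same reduction of odd $n$ to even $n$ via $\alpha_n=2\alpha_{n-1}$ and \eqref{fact1}, the same binary-expansion computation with Lemma \ref{E(k)} (the paper's quantity $b=(n-5)(\alpha_n-1)$ is exactly your geometric-series collapse $2(N-i)+(2i-3)=2N-3$), and the same chain of applications of \eqref{A} for the second identity. The only cosmetic difference is that you evaluate $\theta(n,\alpha_n)$ directly in $Q_n$, whereas the paper first reduces both claims to $\theta(n-2,\alpha_n)=2^{n-3}$ and applies Lemma \ref{E(k)} in $Q_{n-2}$.
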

\begin{proof}
Notice that $\alpha_n<2^{n-3}$, then by \eqref{A},
$$\theta(n,\alpha_n)-2\alpha_n=\theta(n-2,\alpha_n),$$
and
$$\begin{array}{rcl}
\theta(n,2^{n-1}-\alpha_n)&
=&\theta(n-1,2^{n-1}-\alpha_n)+2^{n-1}-\alpha_n\\
&=&\theta(n-1,\alpha_n)-\alpha_n+2^{n-1}=\theta(n-2,\alpha_n)+2^{n-1}.
\end{array}$$
Hence, we only need to prove for any $n\ge3$,
\begin{equation}\label{eq15.1}
	\theta(n-2,\alpha_n)=2^{n-3}.
\end{equation}

It is direct that $\theta(1,1)=1$, so \eqref{eq15.1} holds for $n=3$.

For $n>3$, it is direct that	
\begin{equation*}
\alpha_n=\left\{
\begin{array}{ccc}
&\frac{2^{n-3}+1}{3} \quad & n\ \text{even}\vspace{1.5ex},\\
&\frac{2^{n-3}+2}{3} \quad &n\ \text{odd}.
\end{array}
\right.
\end{equation*}	

Suppose $n$ is even, i.e., there is $r\ge1$ such that $n=2r+2$.
$$\alpha_n=\frac{2^{n-3}+1}{3}=2^0+\sum_{i=1}^{r-1}2^{2i-1}=\left((10)^{r-2}11 \right)_2,$$
where $(10)^{r-2}11$ is the binary expansion of $\alpha_n$. It is direct that,
$$b:=2\sum_{i=1}^{r-1}\left(
(r-i-1)2^{2i-1}+(2i-1)2^{2i-2}
\right)
=(2r-3)(\alpha_n-1)=(n-5)(\alpha_n-1).
$$
Then, by  Lemma \ref{E(k)}, we have
\begin{equation}\label{evenalpha}
\theta(n-2,\alpha_n)=(n-2)\alpha_n-2(r-1)-b=3\alpha_n-1=2^{n-3}.
\end{equation}

Suppose $n>4$ is odd. Then $n-1$ is even and
$$\alpha_n=\frac{2^{n-3}+2}{3}
=2\cdot\frac{2^{n-4}+1}{3}=2\alpha_{n-1}.$$
By \eqref{fact1} and \eqref{evenalpha}, we have
$$\theta(n-2,\alpha_n)
=\theta(n-2,2\alpha_{n-1})
=2\theta(n-3,\alpha_{n-1})
=2\cdot2^{n-4}=2^{n-3}.$$
Hence the lemma holds.
\end{proof}

By \eqref{mn}, we have the following corollary.
\begin{cor}\label{15.2}
	For $n\ge 3$, letting
\begin{equation}\label{ypdef}
y_n:=\dfrac{\left \lceil 2^{n}/24 \right \rceil}{2^{n}},\quad p_n:=\frac{1}{2}-y_n,
\end{equation}
then
	\begin{equation*}
	m_n(y_n)-2y_n=\frac{1}{8},\quad m_n(p_n)=\frac{5}{8}.
	\end{equation*}
\end{cor}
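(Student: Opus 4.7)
The plan is to observe that this corollary is essentially a direct rescaling of Lemma~\ref{15.1}, via the identity $m_n(x) = 2^{-n}\theta(n, 2^n x)$ from equation~\eqref{mn}, which holds precisely when $x \in A_n = \{k/2^n : 0 \le k \le 2^n\}$.

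First I would check that both $y_n$ and $p_n$ lie in $A_n$. Since $\alpha_n = \lceil 2^n/24\rceil$ is an integer, $y_n = \alpha_n/2^n \in A_n$. For $n \ge 3$ one has $\alpha_n \le 2^{n-3} < 2^{n-1}$, so $2^{n-1} - \alpha_n$ is a non-negative integer and $p_n = (2^{n-1} - \alpha_n)/2^n \in A_n$ as well.

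Next, apply \eqref{mn} to obtain
\begin{equation*}
m_n(y_n) = 2^{-n}\theta(n, \alpha_n), \qquad m_n(p_n) = 2^{-n}\theta(n, 2^{n-1} - \alpha_n).
\end{equation*}
Substituting the two identities from Lemma~\ref{15.1} gives
\begin{equation*}
m_n(y_n) - 2y_n = 2^{-n}\bigl(\theta(n,\alpha_n) - 2\alpha_n\bigr) = 2^{-n}\cdot 2^{n-3} = \tfrac{1}{8},
\end{equation*}
and
\begin{equation*}
m_n(p_n) = 2^{-n}\cdot 5\cdot 2^{n-3} = \tfrac{5}{8},
\end{equation*}
which are the claimed identities. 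There is no real obstacle here; the entire content is already in Lemma~\ref{15.1}, and the corollary only repackages it in the scale-invariant language of $m_n$ that will be convenient for the Takagi-function estimates in the remainder of Section~4.
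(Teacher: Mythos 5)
Your proof is correct and is exactly the argument the paper intends: the paper derives this corollary from Lemma~\ref{15.1} purely "by \eqref{mn}", and you have simply filled in the routine details (checking $y_n,p_n\in A_n$ and substituting). Nothing further is needed.
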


\begin{lem}\label{15.3}
	For any $n\ge N\ge 4$,  $m_n(p_N)=\frac{5}{8}$.
	For any even $N\ge 6$, for any $n\ge N$ and $p_{N-2}\le x \le p_N$,
	$$m_n(x)\ge m_N(x)\equiv\frac{5}{8}.$$
\end{lem}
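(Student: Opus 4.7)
The plan is to reduce both assertions to two facts: (i) the vanishing of $\Delta_i$ at points of $A_N$ for $i>N$, already observed in the paragraph preceding \eqref{mn}, and (ii) the arithmetic identity $p_N-p_{N-2}=2^{-N}$, which will turn $[p_{N-2},p_N]$ into a single level-$N$ dyadic interval on which $m_N$ is linear.

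First I would prove the pointwise statement $m_n(p_N)=5/8$. Since $\alpha_N=\lceil 2^N/24\rceil$ is an integer, $p_N=(2^{N-1}-\alpha_N)/2^N\in A_N$, so $\Delta_i(p_N)=0$ for every $i>N$. Therefore, for any $n\ge N$,
$$m_n(p_N)=m_N(p_N)+\sum_{i=N+1}^{n}\Delta_i(p_N)=m_N(p_N)=\frac{5}{8},$$
where the last equality is Corollary \ref{15.2}.

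For the interval statement, applying the pointwise statement just proved (directly, and with $N$ replaced by $N-2$ and $n=N$) yields $m_N(p_N)=m_N(p_{N-2})=5/8$. Next I would compute the length of $[p_{N-2},p_N]$. Using the closed form derived in the proof of Lemma \ref{15.1} for even indices, $\alpha_N=(2^{N-3}+1)/3$ and $\alpha_{N-2}=(2^{N-5}+1)/3$, so
$$y_{N-2}-y_N=\frac{2^{N-5}+1}{3\cdot 2^{N-2}}-\frac{2^{N-3}+1}{3\cdot 2^N}=\frac{(2^{N-3}+4)-(2^{N-3}+1)}{3\cdot 2^N}=2^{-N},$$
hence $p_N-p_{N-2}=2^{-N}$. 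Since $p_{N-2},p_N\in A_{N-2}\subset A_N$, the interval $[p_{N-2},p_N]$ is a single dyadic sub-interval at level $N$. Each $\Delta_i$ with $i\le N$ is linear on any such interval, so $m_N$ is linear on $[p_{N-2},p_N]$; a linear function equal to $5/8$ at both endpoints is identically $5/8$. The inequality $m_n(x)\ge m_N(x)$ for $n\ge N$ is then immediate from $m_n(x)=m_N(x)+\sum_{i=N+1}^{n}\Delta_i(x)$ and the non-negativity of each $\Delta_i$.

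The only non-routine step is the identity $p_N-p_{N-2}=2^{-N}$, which rests on the explicit parity-dependent formulas for $\alpha_n$ from Lemma \ref{15.1}. Once this length has been pinned down, piecewise linearity of $m_N$ on level-$N$ dyadic intervals does all the remaining work, and the hypothesis that $N$ be even (with $N\ge 6$) is used precisely to make both $\alpha_N$ and $\alpha_{N-2}$ fall into the even-index branch of Lemma \ref{15.1}.
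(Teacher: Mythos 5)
Your proof is correct and follows essentially the same route as the paper's: both rest on the identity $p_N-p_{N-2}=2^{-N}$, the linearity of $m_N$ on level-$N$ dyadic intervals, the endpoint values supplied by Corollary \ref{15.2}, and the non-negativity of the $\Delta_i$ for the final inequality. One cosmetic slip: $p_N\notin A_{N-2}$ in general (for even $N$ the numerator $2^{N-1}-\alpha_N$ is odd), but your argument only uses that $p_{N-2},p_N\in A_N$ are at distance $2^{-N}$, which does hold.
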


\begin{proof}
If  $n>N$, by \eqref{mn} and \eqref{fact1},
$$m_n(p_N)=m_{n-1}(p_N)=\cdots =m_N(p_N)=\frac{5}{8}.$$
	
If $N$ is odd, then
\begin{equation}\label{pnodd}
p_N=\frac{1}{2}-\frac{2^{N-3}+2}{3\cdot 2^{N}}
=\frac{1}{2}-\dfrac{2^{(N-1)-3}+1}{3\cdot 2^{N-1}}=p_{N-1}.
\end{equation}
	
So, we only discuss even number $N$. It is direct that
\begin{equation}\label{pnd}
p_{N}-p_{N-2}=\dfrac{2^{N-5}+1}{3\cdot 2^{N-2}}-\dfrac{2^{N-3}+1}{3\cdot 2^{N}}=\dfrac{1}{2^{N}}.
\end{equation}
Since $m_N(p_{N-2})=m_N(p_N)={5}/{8}$, and by \eqref{mn(x)}, $m_N(x)$ is linear in the interval $[p_{N-2},p_N]$,
for any $p_{N-2}\le x \le p_N$,
$$m_N(x)\equiv\frac{5}{8},$$
Moreover,
$$m_n(x)=m_N(x)+\sum_{i=N+1}^{n}\Delta_{i}(x)=\frac{5}{8}+\sum_{i=N+1}^{n}\Delta_{i}(x)\ge\frac{5}{8}. $$
\end{proof}
	
\begin{proof}[Proof of Theorem \ref{claim15}]
Let $\hat{x}=2^{-n}k$, $\hat{y}=2^{-n}t$. By \eqref{B},
\begin{equation*}
\begin{array}{ccl}
\theta(n,k,t)&=&\theta(n,2^n \hat{x},2^n \hat{y})\vspace{1ex}\\
& \ge& \theta(n-1,2^n \hat{y})+\theta(n-1,2^n (\hat{x}-\hat{y}))+2^n \hat{x}-2\cdot 2^n \hat{y} \vspace{1ex}\\
&=&2^n\left(\dfrac{1}{2} \dfrac{\theta(n-1,2^{n-1} 2\hat{y})}{2^{n-1}}+
\dfrac{1}{2} \dfrac{\theta(n-1, 2^{n-1} 2(\hat{x}-\hat{y}))}{2^{n-1}}+\hat{x}-2\hat{y}\right).
\end{array}
\end{equation*}
Apply \eqref{m_n(x)},
\begin{equation*}
\begin{array}{l}
\dfrac{\theta(n-1,2^{n-1} 2\hat{y})}{2^{n-1}}=m_{n-1}(2\hat{y})=2(m_n(\hat{y})-\hat{y}) \vspace{1ex}\\
\dfrac{\theta(n-1, 2^{n-1} 2(\hat{x}-\hat{y}))}{2^{n-1}}=m_{n-1}(2(\hat{x}-\hat{y}))=2(m_n(\hat{x}-\hat{y})-(\hat{x}-\hat{y})).
\end{array}
\end{equation*}
Then,
\begin{equation*}
2^{-n}\theta(n,2^n \hat{x},2^n \hat{y})=m_n(\hat{y})+m_n(\hat{x}-\hat{y})-2\hat{y}.
\end{equation*}
Define, for $0\le y<x\le 1/2$,
\begin{equation}\label{andef}
a_n(x,y):=m_n(y)+m_n(x-y)-2y.
\end{equation}
We have
	\begin{equation*}
	\theta(n,2^n \hat{x},2^n \hat{y})\ge 2^n a_n(\hat{x},\hat{y}).
	\end{equation*}
To prove the lemma, we only need to show that,
\begin{equation}\label{an}
a_n(\hat{x},\hat{y})\ge f(\hat{x}).
\end{equation}

For any $0<i,j<2^{n-1}$, the square with vertices
$$\begin{array}{ll}
v_1=((i-1)2^{-n},(j-1)2^{-n}),& v_2=(i2^{-n},(j-1)2^{-n}),\\
v_3=((i-1)2^{-n},j2^{-n}),& v_4=(i2^{-n},j2^{-n})
\end{array}$$
has side length of $2^{-n}$.
Since $m_n(x)$ is linear in each interval $[(i-1)/2^{n},i/2^n]$ with $0<i\le 2^{n}$,
$a_n(x,y)$ is linear on the two triangles $v_1v_2v_3$ and $v_2v_3v_4$.
Especially, for any $x\in[(i-1)/2^n,i/2^n]$, $y\in[(j-1)/2^n,j/2^n]$,
\begin{equation}\label{square}
a_n(x,y)\ge\min \{a_n(v_1),a_n(v_2),a_n(v_3),a_n(v_4)\}.
\end{equation}

If $\hat{x}\le0.2$, then $f(\hat{x})<0$, and hence \eqref{an} hold.

Define for $819\le i\le 2^{11}+1$, $170\le j\le 619$,
$$c_i=\frac{i}{2^{12}},\ d_j=\frac{j}{2^{12}}.$$
Note that, for $y_{12}$ defined in \eqref{ypdef},
$$c_{819}<0.2,\ c_{2^{11}}=0.5,\
d_{170}<\frac{1}{24}<y_{12}=d_{171},\ d_{618}<\frac{1}{24}+\frac{7}{64}<d_{619}.$$

We present Mathematica codes in the appendix to verify on computer numerically that, for $819\le i\le 2^{11}$, $171\le j\le 619$,
\begin{equation}\label{v1}
a_{12}(c_i,d_j)\ge \max\{f(c_i),f(c_{i+1})\},
\end{equation}
and for $819\le i\le 1963$,
\begin{equation}\label{v2}
a_{12}(c_i,d_{170})> f(c_{i+1}).
\end{equation}

If $3\le n\le 12$, there exist $819\le i\le 2^{11}$, $171\le j\le 619$ such that
$$\hat{x}=c_i, \ \hat{y}=d_j.$$
Then, by \eqref{v1},
$$a_n(\hat{x},\hat{y})=a_{12}(c_i,d_j)\ge f(c_i)=f(\hat{x}),$$
\eqref{an} holds.

Assume $n>12$. We verify \eqref{an} in the following 4 cases.
\begin{itemize}
\item[(i)] $0.2< \hat{x}\le1/2$, $\frac{171}{2^{12}}\le \hat{y}\le \frac{1}{24}+\frac{7}{64}$.\smallskip
\item[(ii)] $0.2< \hat{x}\le\frac{1963}{2^{12}}$, $\frac{1}{24}\le \hat{y}\le \frac{171}{2^{12}}$.\smallskip
\item[(iii)] $\frac{1963}{2^{12}}\le \hat{x}\le 1/2$,  $\frac{1}{24}\le \hat{y}\le\frac{171}{2^{12}}$
     such that $\hat{y}=y_j$ for $12\le j\le n$. \smallskip
\item[(iv)] $\frac{1963}{2^{12}}\le \hat{x}\le 1/2$, $\frac{1}{24}\le \hat{y}\le\frac{171}{2^{12}}$ such that $\hat{y}\ne y_j$ for $12\le j\le n$.
\end{itemize}
Note that, Case (i) is verified in \cite{Guu}, Case (ii),(iii),(iv) are not verified in \cite{Guu}.

\medskip

Case (i): $0.2< \hat{x}\le1/2$, $\frac{171}{2^{12}}\le \hat{y}\le \frac{1}{24}+\frac{7}{64}$.

There is a pair $(i,j)$ with $819\le i<2^{11}$ and $171\le j< 619$ such that
$$c_i\le \hat{x}\le c_{i+1},\ d_j\le \hat{y}\le d_{j+1}.$$
Since $f(x)$ is increasing in $[c_i,c_{i+1}]$, by \eqref{square} and \eqref{v1},
\begin{equation}\label{a12}
	\begin{array}{rcl}
	a_{12}(\hat{x},\hat{y})&\ge& \min \{a_{12}(c_i,d_j),a_{12}(c_{i+1},d_j),
	a_{12}(c_{i},d_{j+1}),a_{12}(c_{i+1},d_{j+1})\}\\[3pt]
	&\ge & f(c_{i+1})\ge f(\hat{x}).\end{array}
\end{equation}
This proves \eqref{an} in case (i).

\medskip

Case (ii): $0.2< \hat{x}\le\frac{1963}{2^{12}}$, $\frac{1}{24}\le \hat{y}\le \frac{171}{2^{12}}$.

Note that $d_{170}<1/24<d_{171}$, we have
$$d_{170}< \hat{y} \le d_{171}.$$
There is $819\le i< 1963$ such that
$$c_{i} \le \hat{x} \le c_{i+1}.$$
Since $f(x)$ is increasing in $[c_i,c_{i+1}]$, by \eqref{square},\eqref{v1} and \eqref{v2},\ inequality \eqref{a12} also holds,
which proves \eqref{an} in case (ii).

\medskip

Case (iii): $\frac{1963}{2^{12}}\le \hat{x}\le 1/2$,  $\frac{1}{24}\le \hat{y}\le y_{12}=\frac{171}{2^{12}}$
     with $\hat{y}=y_j$ for some $12\le j\le n$.

By definition in \eqref{ypdef}, it is clear  that,
$$y_j+p_4\le y_{12}+p_4= \frac{1963}{2^{12}},\quad
y_j+p_j=y_j+(1/2-y_j)=1/2.$$
Since $\frac{1963}{2^{12}}\le \hat{x}\le 1/2$, $p_4\le \hat{x}-y_j\le p_j$.
There exists $3\le l \le j/2$ such that
$$\hat{x}-y_j\in [p_{2l-2},p_{2l}]. $$
By Lemma \ref{15.3},
$$m_j(\hat{x}-y_j)\ge m_{2l}(\hat{x}-y_j) =  \frac{5}{8}.$$
Combining with Corollary \ref{15.2},
$$a_n(\hat{x},y_j)\ge a_j(\hat{x},y_j)=m_j(y_j)-2y_j+m_j(\hat{x}-y_j)\ge \frac{1}{8}+\frac{5}{8}=\frac{3}{4}\ge f(\hat{x}).$$
This proves \eqref{an} in case (iii).

\medskip

Case (iv): $\frac{1963}{2^{12}}\le \hat{x}\le 1/2$, $\frac{1}{24}\le \hat{y}\le y_{12}=\frac{171}{2^{12}}$ with $\hat{y}\ne y_j$ for any $12\le j\le n$.

In this case, by \eqref{pnodd}, there exists even $j$ with $12<j\le n$ such that
$$\frac{1}{24}\le y_{j}<\hat{y} < y_{j-1}\le\frac{171}{2^{12}},$$
and there exists $i$ such that
$$\frac{1963}{2^{12}}\le (i-1)2^{-j}\le \hat{x}\le i2^{-j} \le \frac{1}{2}.$$
Since, by \eqref{pnd}, $y_{j-1}-y_{j}=2^{-j}$, $(\hat{x},\hat{y})$ is located in the square of side length $2^{-j}$
with the four corners
$$\begin{array}{ll}
v_1=((i-1)2^{-j},y_{j-1}),& v_2=(i2^{-j},y_{j-1}),\\
v_3=((i-1)2^{-j},y_{j}),& v_4=(i2^{-j},y_{j}).
\end{array}$$
By analysis on Case (iii),
$$a_{j}(v_1),\ a_{j}(v_2),\ a_{j}(v_3),\ a_{j}(v_4)\ge 3/4.$$
By \eqref{square}, we have
$$a_n(\hat{x},\hat{y})\ge a_{j}(\hat{x},\hat{y})\ge 3/4.$$
This proves \eqref{an} in case (iv).
\end{proof}

Before proof of Theorem  \ref{thm16}, we prove some lemmas that is implied in proof of Theorem 16 in \cite{Guu}.
Recall the definitions in \eqref{halfplane}.
For any  $S\subset V(Q_n)$, $1\le i\le n$,
$$
g_{n,i,0}^{-1}(S):=g_{n,i,0}^{-1}(S\cap H_{n,i,0}),\quad
g_{n,i,1}^{-1}(S):=g_{n,i,1}^{-1}(S\cap H_{n,i,1}).
$$

\begin{lem}
For $n\ge 1$ and $S\subset V(Q_n)$, we have
\begin{equation}\label{multicut}
\theta(n,S)= \sum_{i=1}^{n}|g_{n,i,0}^{-1}(S)\Delta g_{n,i,1}^{-1}(S)|,
\end{equation}
and, for any $1\le i\le n$,
\begin{equation}\label{onecut}
\theta(n,S)=\theta(n-1,g_{n,i,0}^{-1}(S))+\theta(n-1,g_{n,i,1}^{-1}(S))+|g_{n,i,0}^{-1}(S)\Delta g_{n,i,1}^{-1}(S)|.
\end{equation}
\end{lem}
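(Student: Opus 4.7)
The plan is to establish \eqref{onecut} by a direct edge-partition argument, and then derive \eqref{multicut} by summing over the coordinate direction. The key observation underlying both identities is that for each $1\le i\le n$ and $j\in\{0,1\}$, the map $g_{n,i,j}$ is a graph isomorphism from $Q_{n-1}$ onto the subgraph of $Q_n$ induced by the half-plane $H_{n,i,j}$: two points $g_{n,i,j}(x)$ and $g_{n,i,j}(y)$ are adjacent in $Q_n$ if and only if $x$ and $y$ are adjacent in $Q_{n-1}$, because inserting a fixed symbol $j$ in coordinate $i$ preserves exactly which other coordinates agree.

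For \eqref{onecut}, fix $1\le i\le n$ and split $E(Q_n)$ into three classes according to the $i$-th coordinates of the endpoints: edges lying entirely in $H_{n,i,0}$, edges lying entirely in $H_{n,i,1}$, and edges crossing between the two half-planes. For the first class, an edge $\{g_{n,i,0}(x),g_{n,i,0}(y)\}$ has exactly one endpoint in $S$ iff exactly one of $x,y$ lies in $g_{n,i,0}^{-1}(S)$, i.e., iff $\{x,y\}$ is a boundary edge of $g_{n,i,0}^{-1}(S)$ in $Q_{n-1}$; so this class contributes $\theta(n-1,g_{n,i,0}^{-1}(S))$. By the same reasoning the second class contributes $\theta(n-1,g_{n,i,1}^{-1}(S))$. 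Each crossing edge is uniquely of the form $\{g_{n,i,0}(x),g_{n,i,1}(x)\}$ for some $x\in\{0,1\}^{n-1}$, and it is a boundary edge of $S$ precisely when exactly one of its two endpoints lies in $S$, i.e., when $x\in g_{n,i,0}^{-1}(S)\,\Delta\, g_{n,i,1}^{-1}(S)$. Summing the three counts gives \eqref{onecut}.

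For \eqref{multicut}, partition $E(Q_n)$ instead by the unique direction $i$ in which the endpoints of an edge differ: the set $E_i$ of such edges is precisely the crossing class from the previous paragraph (for the given $i$), so its contribution to the boundary of $S$ is $|g_{n,i,0}^{-1}(S)\,\Delta\, g_{n,i,1}^{-1}(S)|$. Since $E(Q_n)=\bigsqcup_{i=1}^{n}E_i$, summing over $i$ yields \eqref{multicut}. The whole argument is essentially bookkeeping, and no serious obstacle is expected; the only point requiring care is the identification between edges inside $H_{n,i,j}$ and edges of $Q_{n-1}$, which is immediate from the definitions in \eqref{halfplane}.
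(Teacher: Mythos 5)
Your proof is correct and uses essentially the same argument as the paper: partition $E(Q_n)$ by coordinate direction, identify the crossing boundary edges with $g_{n,i,0}^{-1}(S)\,\Delta\,g_{n,i,1}^{-1}(S)$, and identify the within-half-plane boundary edges with those of the projected sets in $Q_{n-1}$. The only difference is cosmetic (you derive \eqref{onecut} first and then sum to get \eqref{multicut}, while the paper does the reverse), so no further comment is needed.
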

\begin{proof}
Let
$$\Theta(S)=\{ \{v,w\}\in E(Q_n):v \in S, w \notin  S \}.$$
For $1\le i \le n$, let
$$E_{i}=\{ \{v,w\}\in E(Q_n):v \in H_{n,i,0}, w \in H_{n,i,1} \}.
$$
It is clear that $\{E_i\}_{i=1}^{n}$ is a partition of $E(Q_n)$, 
then
$$\Theta(S)=\bigcup_{i=1}^n\Theta(S)\cap E_i.$$
Moreover,
$$\theta(n,S)=|\Theta(S)|=\sum_{i=1}^n|\Theta(S)\cap E_i|.$$
	
To prove \eqref{multicut}, we only need to show,
for any $1 \le i \le n$,
\begin{equation}\label{eq16.1}
|\Theta(S)\cap E_i|=|g_{n,i,0}^{-1}(S)\Delta g_{n,i,1}^{-1}(S)|.
\end{equation}
We only prove the case $i=1$. The proof of other cases are same.
	
Let $$S_0=g_{n,1,0}^{-1}(S),\quad S_1=g_{n,1,1}^{-1}(S),$$
then $$S=0S_0\cup 1S_1,$$
where $aA=\{ax\ |\ x\in A\}$, for $a\in\{0,1\}$ and $A\subset\{0,1\}^k$ with $k\ge0$.
$S$ may also be written as a union of four disjoint sets:
$$
S=0(S_0 \backslash S_1) \cup 0(S_0 \cap S_1)\cup 1(S_1 \cap S_0) \cap 1(S_1 \backslash S_0).
$$
We have
$$|\Theta(S)\cap E_1|= |0(S_0 \backslash S_1)|+|1(S_1 \backslash S_0)|
=|S_0\backslash S_1|+ |S_1\backslash S_0|=|S_0\Delta S_1|.$$
This proves \eqref{eq16.1} for $i=1$.

It is direct that
$$|\Theta(S)\backslash E_1|=\theta(n-1,S_0)+\theta(n-1,S_1).$$
This proves \eqref{onecut} for $i=1$. The proofs of \eqref{onecut} for $2\le i\le n$ are same.
\end{proof}

\begin{lem}\label{lem16.2}
Take $n>1$, $S\subset V(Q_n)$ and $1\le i\le n$. Let
$$S_0=g_{n,i,0}^{-1}(S),\quad S_1=g_{n,i,1}^{-1}(S).$$
Then
$$\type(S)\le 2\type(S_0)+|S_1 \backslash S_0|,\quad
\type(S)\le 2\type(S_1)+|S_0 \backslash S_1|.$$
\end{lem}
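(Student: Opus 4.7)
The plan is to prove the first inequality $\type(S)\le 2\type(S_0)+|S_1\setminus S_0|$ by exhibiting a single half plane $H\in\mathscr{H}_n$ witnessing the bound; the second inequality then follows by swapping the roles of $S_0$ and $S_1$. The key observation is that half planes of $Q_n$ that fix a coordinate different from the $i$-th one split nicely along the decomposition $V(Q_n)=H_{n,i,0}\sqcup H_{n,i,1}$ used to define $S_0,S_1$.

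First I would pick a half plane $H'\in\mathscr{H}_{n-1}$ realizing the type of $S_0$, that is, with $|S_0\cap H'|=\type(S_0)$. Writing $H'=H_{n-1,k,a}$ for some $1\le k\le n-1$ and $a\in\{0,1\}$, I would lift it to a half plane $H\in\mathscr{H}_n$ by reindexing: let $H=H_{n,k,a}$ if $k<i$ and $H=H_{n,k+1,a}$ if $k\ge i$. The point of this indexing is that the insertion maps $g_{n,i,0}$ and $g_{n,i,1}$ both send $H'$ exactly to the corresponding slice of $H$, so
\[
 g_{n,i,0}^{-1}(S\cap H)=S_0\cap H',\qquad g_{n,i,1}^{-1}(S\cap H)=S_1\cap H'.
\]

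Next I would decompose $S\cap H=(S\cap H\cap H_{n,i,0})\cup(S\cap H\cap H_{n,i,1})$ and use the bijections above to rewrite
\[
 |S\cap H|=|S_0\cap H'|+|S_1\cap H'|.
\]
The first summand is exactly $\type(S_0)$. For the second, I would split $S_1\cap H'=((S_0\cap S_1)\cap H')\cup((S_1\setminus S_0)\cap H')$ and bound this by $|S_0\cap H'|+|S_1\setminus S_0|=\type(S_0)+|S_1\setminus S_0|$. Adding gives $|S\cap H|\le 2\type(S_0)+|S_1\setminus S_0|$, and since $\type(S)\le|S\cap H|$ by definition, the first inequality follows. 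Interchanging the roles of $S_0$ and $S_1$ (i.e., starting instead from a half plane realizing $\type(S_1)$) yields the second inequality with no further work.

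I do not expect any real obstacle; the only point needing care is the coordinate reindexing between $\mathscr{H}_{n-1}$ and $\mathscr{H}_n$, to make sure the chosen $H$ really satisfies $g_{n,i,\epsilon}^{-1}(H\cap H_{n,i,\epsilon})=H'$ for $\epsilon=0,1$. Once that bookkeeping is in place, the rest is elementary counting.
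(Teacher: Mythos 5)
Your proposal is correct and follows essentially the same route as the paper: lift the half plane of $Q_{n-1}$ realizing $\type(S_0)$ to a half plane $H$ of $Q_n$ (the paper just normalizes $i=1$ and the realizing coordinate to be the first, writing $H=H_{n,2,0}=H_{00}\cup H_{10}$), split $|S\cap H|=|S_0\cap H'|+|S_1\cap H'|$, and bound the second term via $|(S_0\cap S_1)\cap H'|\le|S_0\cap H'|=\type(S_0)$ plus $|S_1\setminus S_0|$. The reindexing bookkeeping you flag is exactly what the paper's WLOG reduction handles, and the symmetric argument gives the second inequality.
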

\begin{proof} We only prove the first inequality.
	Without loose of generality, suppose $i=1$. The proof for $1<i\le n$ are same.
	It is clear that
	$$0S_0=S\cap H_{n,1,0},\quad 1S_1=S\cap H_{n,1,1},\quad S=0S_0\cup 1S_1.$$
	Define, for $k,j=0,1$,
	$$H_{kj}=g_{n,1,k}\circ g_{n-1,1,j}(\{0,1\}^{n-2}).$$
	Without loose of generality, suppose
	$$
	\type(S_0)=|0S_0 \cap H_{00}|.
	$$
	Then, we have,
	$$
	|1(S_0\cap S_1) \cap H_{10}|=|0(S_0\cap S_1) \cap H_{00}|
	\le|0S_0 \cap H_{00}|=\type(S_0).
	$$
	Hence
	$$
	|1S_1 \cap H_{10}|=|1(S_1\cap S_0) \cap H_{10}|+|1(S_1 \backslash S_0) \cap H_{10}|
	\le \type(S_0)+|S_1 \backslash S_0|.
	$$
	Then,
	$$
	\begin{array}{rcl}
	\type(S)&\le& |S\cap H_{n,2,0}|\\
	&=&|(0S_0 \cup 1S_1)\cap (H_{00} \cup H_{10})|\\
	&=&|0S_0\cap H_{00}|+|1S_1\cap H_{10}|\\
	&\le& 2\text{Type}(S_0)+|S_1 \backslash S_0|.
	\end{array}
	$$
\end{proof}

\begin{lem}\label{lem16.3}
	For $n>1$, $S\subset V(Q_n)$ with $|S|\ge 2^{n-1}$, we have
	$$\type(S^c)=\type(S)+2^{n-1}-|S|.$$
\end{lem}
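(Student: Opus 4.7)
The plan is to expand both types directly from the definition $\type(S)=\min_{H\in\mathscr{H}_n}|S\cap H|$ and exploit the fact that the collection $\mathscr{H}_n$ of $2n$ half planes is closed under taking complements: for any $H=H_{n,i,0}$ one has $H^c=H_{n,i,1}\in\mathscr{H}_n$, and $|H|=|H^c|=2^{n-1}$.

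First I would rewrite $|S^c\cap H|$ in terms of $|S\cap H|$. Since $H\subset V(Q_n)$ has exactly $2^{n-1}$ elements and $S^c\cap H=H\setminus(S\cap H)$, we get
\begin{equation*}
|S^c\cap H|=2^{n-1}-|S\cap H|.
\end{equation*}
Taking the minimum over $H\in\mathscr{H}_n$ turns a minimum of $-|S\cap H|$ into a negative of a maximum, so
\begin{equation*}
\type(S^c)=\min_{H\in\mathscr{H}_n}|S^c\cap H|=2^{n-1}-\max_{H\in\mathscr{H}_n}|S\cap H|.
\end{equation*}

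Next I would use the partition $S=(S\cap H)\cup(S\cap H^c)$, which gives $|S\cap H|=|S|-|S\cap H^c|$, to rewrite the maximum as
\begin{equation*}
\max_{H\in\mathscr{H}_n}|S\cap H|=|S|-\min_{H\in\mathscr{H}_n}|S\cap H^c|.
\end{equation*}
Because $H\mapsto H^c$ is a bijection on $\mathscr{H}_n$, the minimum on the right equals $\min_{H'\in\mathscr{H}_n}|S\cap H'|=\type(S)$. Substituting back yields
\begin{equation*}
\type(S^c)=2^{n-1}-\bigl(|S|-\type(S)\bigr)=\type(S)+2^{n-1}-|S|,
\end{equation*}
as required. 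There is no real obstacle here; the only subtlety is noticing the closure of $\mathscr{H}_n$ under complementation, which converts the maximum of $|S\cap H|$ into a minimum of the same quantity and identifies it with $\type(S)$. The hypothesis $|S|\ge 2^{n-1}$ is not used in the identity itself but guarantees $\type(S^c)\ge\type(S)\ge 0$, fitting the regime in which the lemma will be applied later.
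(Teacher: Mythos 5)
Your proof is correct and follows essentially the same route as the paper's: both reduce $\type(S^c)$ to $2^{n-1}-\max_{H}|S\cap H|$ and then identify that maximum with $|S|-\type(S)$ via the closure of $\mathscr{H}_n$ under complementation (a step the paper asserts directly "by the definition of type" and you justify slightly more explicitly). Your remark that the hypothesis $|S|\ge 2^{n-1}$ is not needed for the identity itself is also accurate.
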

\begin{proof}
By the definition of type,
$$\type(S)=\min\limits_{H \in \mathscr{H}_n}|S\cap H|,\quad |S|-\text{Type}(S)=\max\limits_{H \in \mathscr{H}_n}|S\cap H|.$$
Since, for any $H\in\mathscr{H}_n$,
$|S^c\cap H|+|S\cap H|=|H|=2^{n-1},$
we have
$$\begin{array}{cl}
\text{Type}(S^c)&=\min\limits_{H \in \mathscr{H}_n}|S^c\cap H|\\
&=2^{n-1}-\max\limits_{H \in \mathscr{H}_n}|S\cap H|\\
&=2^{n-1}-(|S|-\text{Type}(S))\\
&=\text{Type}(S)+2^{n-1}-|S|.\\
\end{array}$$
\end{proof}

\begin{proof}[Proof of Theorem \ref{thm16}] We prove by induction.
In Table \ref{table}, we list all $\theta(n,k,t)$ for $n=5$ and $k,t$ satisfying \eqref{condkt}, so that we can verify the theorem for $n=5$ directly.

\begin{table}[h]
	\centering
	\caption{Comparision of $\theta(5,k,t)$\ and\ $2^5\cdot f(2^{-5}k)$}
	\begin{tabular}{|c|ccccc c c|}
		\hline
		\diagbox[width=8em]{$t$}{$\theta(5,k,t)$}{$k$}&10&11&12&13&14&15&16\\ \hline
		5&30&31&32&31&30&29&26\\
		6&-&-&32&33&32&33&30\\
		7&-&-&-&-&34&33&34\\
		8&-&-&-&-&-&-&32\\ \hline
		$2^5\cdot f(2^{-5}k)$&13.7&16.9&19.4&21.4&22.9&23.7&24.0\\ \hline
	\end{tabular}
\label{table}	
\end{table}

Take $n\ge6$.
Suppose the theorem holds for $m$ with $5\le m < n$.
We prove the theorem for $n$.
	
Take $S\subset V(Q_n)$.
Assume $|S|=k$ and Type$(S)=t$ with $k,t$ satisfying \eqref{condkt}.	

Suppose, for any $1\le i\le n,$
$$|g_{n,i,0}^{-1}(S)\Delta g_{n,i,1}^{-1}(S)| > \frac{7}{64} \cdot 2^n.$$
	
If $n=6$, then for any $1\le i\le n$,
$$|g_{6,i,0}^{-1}(S)\Delta g_{6,i,1}^{-1}(S)|\ge 8.$$
By \eqref{multicut} and $f(x)\le 3/4$ for $x\le 1/2$,
$$\theta(n,S) \ge 6 \cdot 8=48 = \frac{3}{4} \cdot 2^6 \ge f(2^{-6}k)\cdot 2^6.$$
	
If $n\ge 7$, by \eqref{multicut},
$$\theta(n,S) > \frac{7}{64} \cdot n \cdot 2^n \ge
\frac{49}{64} \cdot 2^n > \frac{3}{4} \cdot 2^n \ge 2^nf(2^{-n}|S|).$$
	
Suppose there exists $1\le i\le n$ such that,
	$$|g_{n,i,0}^{-1}(S)\Delta g_{n,i,1}^{-1}(S)| \le  \frac{7}{64} \cdot 2^n.$$
Without loose of generality, we assume $i=1$. The proof for the other cases are same.

Denote $$S_0=g_{n,1,0}^{-1}(S),\quad S_1=g_{n,1,1}^{-1}(S).$$	
It is clear that
	$$0S_0=S\cap H_{n,1,0},\ 1S_1=S\cap H_{n,1,1},\
	S=0S_0\cup 1S_1,\ |S_0\Delta S_1|\le \frac{7}{64} \cdot 2^n.$$
Without loose of generality, suppose $|S_0|\ge|S_1|$.
	Define
	\begin{equation}\label{16.5}
	x=2^{-n}|S|,\ x_0=2^{-n+1}|S_0|,\  x_1=2^{-n+1}|S_1|,\ h=(x_0-x_1)/2.
	\end{equation}
	Notice that $x_0=x+h$ and $x_1=x-h$. By \eqref{16.5}, we have
	\begin{equation}\label{16.6}
	x\le \frac{1}{2},\ x_0\le \frac{1}{2}+h,\ x_1\le \frac{1}{2}.
	\end{equation}
	Since
	$$2^{n}h=|S_0|-|S_1|\le|S_0\Delta S_1|\le 2^n\frac{7}{64},$$
	we have $h\le 7/64$. Combining with Lemma \ref{claim14},
	$$f(x_0)+f(x_1)+2h=f(x+h)+f(x-h)+2h\ge 2f(x).$$
To prove the theorem, we only need to prove
	\begin{equation}\label{ans}
	\theta(n,S)\ge 2^{n-1}\left( f(x_0)+f(x_1)+2h\right).
	\end{equation}
	
By Lemma \ref{lem16.2},
$$
\type(S_0)\ge (\type(S)-|S_1 \backslash S_0|)/2
\ge (\type(S)-|S_0 \Delta S_1|)/2\ge\frac{1}{24} \cdot 2^{n-1}.
$$
By same argument, we have
$$\type(S_1)\ge \frac{1}{24}\cdot 2^{n-1}.$$
Note that $x_1\le 1/2$ by \eqref{16.6}. Let $t_1=2^{-n+1}\text{Type}(S_1)$.
If $1/24\le t_1\le 1/24+7/64$,
then by Theorem \ref{claim15},
\begin{equation}\label{typearg1}
\theta(n-1,S_1)\ge \theta(n-1,x_1 2^{n-1},t_1 2^{n-1})\ge f(x_1)2^{n-1}.
\end{equation}
If $ t_1> 1/24+7/64$, then by induction hypothesis, we also have
\begin{equation}\label{typearg2}
\theta(n-1,S_1)\ge \theta(n-1,x_1 2^{n-1},t_1 2^{n-1})\ge f(x_1)2^{n-1}.
\end{equation}
	
For $S_0$, we consider in following two cases. Assume first $0<x_0\le 1/2.$
	
By a same analysis as \eqref{typearg1} and \eqref{typearg2}, we have
	$$\theta(n-1,S_0)\ge f(x_0)2^{n-1}.$$
Hence, by \eqref{onecut},
	$$\theta(n,S)=\theta(n-1,S_0)+\theta(n-1,S_1)+ |S_0\Delta S_1|
	\ge 2^{n-1}\left(f(x_0)+f(x_1)+2h\right),$$
	This proves \eqref{ans}.
	
	Suppose $1/2<x_0<1.$
	
Since $S_0\subset V(Q_{n-1})$,
we denote $S_0^c=V(Q_{n-1})\backslash S_0$.
	It is clear that
	$$|S_0^c|=(1-x_0)2^{n-1}<2^{n-2}.$$
	
Since, by \eqref{16.5} and \eqref{16.6}, $|S_0|\le 2^{n-2}+(|S_0|-|S_1|)/2$.  By Lemma \ref{lem16.2} and \ref{lem16.3},
	\begin{equation*}
	\begin{array}{rcl}
	\type(S)&\le& 2\type(S_0)+|S_1 \backslash S_0|\\
	&=& 2(\type(S_0^c)+|S_0|-2^{n-2})+|S_1 \backslash S_0|\\
	&\le&2\type(S_0^c)+|S_0|-|S_1|+|S_1 \backslash S_0|\\
	&=&2\type(S_0^c)+|S_0 \backslash S_1|.
	\end{array}
	\end{equation*}
	Then
	$$\type(S_0^c)\ge (\type(S)-|S_0 \backslash S_1|)/2
	\ge (\type(S)-|S_0 \Delta S_1|)/2\ge\frac{1}{24} \cdot 2^{n-1}.$$
	By a same analysis to \eqref{typearg1} and \eqref{typearg2}, we have
	$$\theta(n-1,S_0^c)\ge f(1-x_0)2^{n-1}=f(x_0)2^{n-1},$$
	where the identity is due to $f(x)$ is symmetric about $1/2$.
	Together with \eqref{onecut}, \eqref{typearg1} and \eqref{typearg2},
	\begin{equation*}
	\begin{array}{rcl}
	\theta(n,S)&=& \theta(n-1,S_0)+\theta(n-1,S_1)+ |S_0\Delta S_1|\\
	&=&\theta(n-1,S_0^c)+\theta(n-1,S_1)+ |S_0\Delta S_1|\\
	&\ge& 2^{n-1}\left(f(x_0) + f(x_1)+ 2h \right).
	\end{array}
	\end{equation*}
	This proves \eqref{ans}.
\end{proof}

\begin{proof}[Proof of Theorem \ref{key}]
	Take any embedding $\eta:Q_n\rightarrow C_{2^n}$.
	For $1\le i\le 2^{n-1}$, let
	$$P_i=\eta^{-1}(\{i,i+1,\cdots,i+2^{n-1}-1\}),\quad t_i=\type(P_i).$$
	Then, by \eqref{wl} and definition of $\theta(n,k,t)$,
	$$WL(Q_n,C_{2^n};\eta)=\sum_{i=1}^{2^{n-1}}\theta(n,P_i)\ge\sum_{i=1}^{2^{n-1}}\theta(n,2^{n-1},t_i).$$
	
	Define an integer sequence $(s_i)_{i=1}^{2^{n-1}}$ by
	$$s_i=\left\{
	\begin{array}{cl}
	2^{n-3}-i+1,&\mbox{ if } 1\le i\le 2^{n-3}+1\\
	i-2^{n-3}-1,&\mbox{ if } 2^{n-3}+1<i\le 2^{n-2}+1\\
	3\cdot 2^{n-3}-i+1,&\mbox{ if } 2^{n-2}+1< i\le 3\cdot2^{n-3}+1\\
	i-3\cdot 2^{n-3}-1,&\mbox{ if } 3\cdot2^{n-3}+1<i\le 2^{n-1}.\\
	\end{array}
	\right.$$
	Note that $(s_i)_{i=1}^{2^{n-1}}$ is a rearrangement of the sequence $(\type(G_i))_{i=1}^{2^{n-1}}$,
	the type sequence \eqref{typegray} of the partition path corresponding to Gray code embedding $\xi_n$.
See, e.g., Figure \ref{gr-s}.
\begin{figure}[htbp]
\begin{minipage}[t]{0.4\linewidth}
\centering
\includegraphics[width=2in]{tygray.jpg}
\caption*{$(\type(G_i))_{i=1}^{2^{n-1}}$}
\end{minipage}%
\begin{minipage}[t]{0.4\linewidth}
\centering
\includegraphics[width=2in]{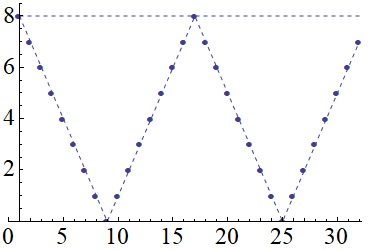}
\caption*{$(s_i)_{i=1}^{2^{n-1}}$}
\end{minipage}%
\caption{Case $n=6$}\label{gr-s}
\end{figure}

To prove the theorem, we only need to prove
	$$\sum_{i=1}^{2^{n-1}}\theta(n,2^{n-1},t_i)\ge\sum_{i=1}^{2^{n-1}}\theta(n,2^{n-1},s_i).$$
	
Define
	$$t_i^{(1)}=\left\{
	\begin{array}{cl}
	t_i,& \mbox{if } 0\le t_i\le 2^{n-3}\\
	2^{n-3},& \mbox{otherwise}.
	\end{array}
	\right.
	$$
By \eqref{n-3}, Theorem \ref{claim15}  and Theorem \ref{thm16}, for $\frac{1}{3}\cdot 2^{n-3}\le t \le 2^{n-2}$,
$$	\theta(n,2^{n-1},t)\ge \theta(n,2^{n-1},2^{n-3}).$$
Then, for any $1\le i\le 2^{n-1}$, $$\theta(n,2^{n-1},t_i)\ge\theta(n,2^{n-1},t_i^{(1)}).$$
	
By Proposition \ref{prop8}, $(t_i)_{i=1}^{2^{n-1}}$ is circular continuous, i.e.,
for $1\le i<2^{n-1}$, $|t_i-t_{i+1}|\le1$, and $|t_1-t_{2^{n-1}}|\le1.$
And then  $(t_i^{(1)})_{i=1}^{2^{n-1}}$ is also circular continuous.
See, e.g., Figure \ref{gr-t}(a) and Figure \ref{gr-t}(b).

\begin{figure}[htbp]
\begin{minipage}[t]{0.3\linewidth}
\centering
\includegraphics[width=1.5in]{t0.jpg}
\caption*{(a). $(t_i)_{i=1}^{2^{n-1}}$}
\end{minipage}%
\begin{minipage}[t]{0.3\linewidth}
\centering
\includegraphics[width=1.5in]{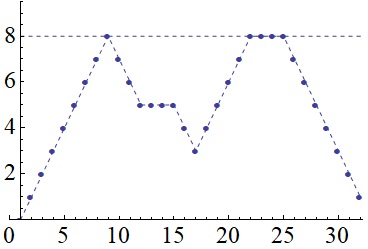}
\caption*{(b). $(t_i^{(1)})_{i=1}^{2^{n-1}}$}
\end{minipage}%
\begin{minipage}[t]{0.3\linewidth}
\centering
\includegraphics[width=1.5in]{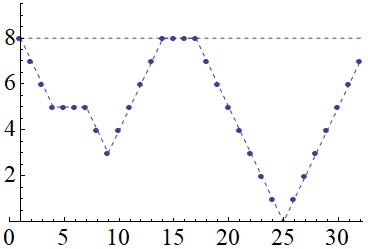}
\caption*{(c). $(t_i^{(2)})_{i=1}^{2^{n-1}}$}
\end{minipage}%

\begin{minipage}[t]{0.3\linewidth}
\centering
\includegraphics[width=1.5in]{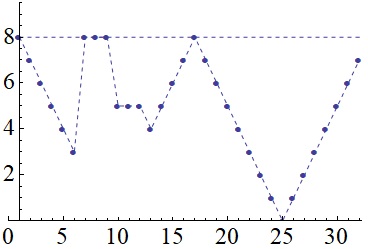}
\caption*{(d). $(t_i^{(3)})_{i=1}^{2^{n-1}}$}
\end{minipage}%
\begin{minipage}[t]{0.3\linewidth}
\centering
\includegraphics[width=1.5in]{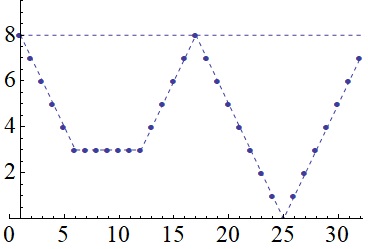}
\caption*{(e). $(t_i^{(4)})_{i=1}^{2^{n-1}}$}
\end{minipage}%
\begin{minipage}[t]{0.3\linewidth}
\centering
\includegraphics[width=1.5in]{ty-s.jpg}
\caption*{(f). $(s_i)_{i=1}^{2^{n-1}}$}
\end{minipage}%
\caption{Modifications of type sequences for a sample embedding $\eta$ }\label{gr-t}
\end{figure}

	By Proposition \ref{prop8}, there are at least two $1\le i\le 2^{n-1}$ such that $t_i^{(1)}= 2^{n-3}$.
	Take a permutation $\sigma$ of $\{1,2,\cdots,2^{n-1}\}$ so that
	$(t_i^{(2)}=t_{\sigma(i)}^{(1)})_{i=1}^{2^{n-1}}$ is circular continuous and
	$$t_1^{(2)}=t_{2^{n-2}+1}^{(2)}=2^{n-3}.$$
See, e.g., Figure \ref{gr-t}(b) and Figure \ref{gr-t}(c).	

Take integer $1<i_1<2^{n-2}+1$ so that, for any $1<j<2^{n-2}+1$,
	$$\theta(n,2^{n-1},t_j^{(2)})\ge \theta(n,2^{n-1},t_{i_1}^{(2)}).$$
	
	Let $m_1=t_{i_1}^{(2)}$.
	
	Since  $(t_i^{(2)})_{i=1}^{2^{n-1}}$ is circular continuous, for each $t$ satisfies $m_1<t\le2^{n-3}$,
	there exist at least two $i$ with $1\le i\le 2^{n-2}+1$ such that $t_i^{(2)}=t$.
	Then there is a permutation $\tau$ of $\{1,2,\cdots,2^{n-2}+1\}$ such that,
	for $1\le i<2^{n-3}-m_1+1$ and $2^{n-3}+m_1+1<i\le2^{n-2}+1$,
	$$t_{\tau(i)}^{(2)}=s_i,$$
	For $1\le i\le 2^{n-2}+1$, let
	$$t_i^{(3)}=t_{\tau(i)}^{(2)}.$$
See, e.g., Figure \ref{gr-t}(c) and Figure \ref{gr-t}(d).
	Then we have
	$$\sum_{i=1}^{2^{n-2}+1}\theta(n,2^{n-1},t_i^{(3)})=\sum_{i=1}^{2^{n-2}+1}\theta(n,2^{n-1},t_i^{(2)}).$$
Moreover, define
	$$t_i^{(4)}=\left\{
	\begin{array}{cl}
	m_1,&\mbox{ if } 2^{n-3}-m_1+1\le i\le 2^{n-3}+m_1+1\\
	t_{i}^{(3)},& \mbox{otherwise}.\\
	\end{array}
	\right.$$
See, e.g., Figure \ref{gr-t}(e). By definition of $m_1$, we have
$$\sum_{i=1}^{2^{n-2}+1}\theta(n,2^{n-1},t_i^{(3)})\ge\sum_{i=1}^{2^{n-2}+1}\theta(n,2^{n-1},t_i^{(4)}).$$
Since $m_1\le 2^{n-3}$, by formula \eqref{small} and Theorem \ref{thm13}.
$$\begin{array}{rcl}
\sum\limits_{i=2^{n-3}-m_1+1}^{2^{n-3}+m_1+1}\theta(n,2^{n-1},s_i)
&=&2\sum_{t=1}^{m_1}\theta(n,2^{n-1},t)+\theta(n,2^{n-1},0)\\
&=&2\sum_{t=1}^{m_1}(2\theta(n-2,t)+2^{n-1})+2^{n-1}\vspace{1.5ex}\\
&\le& (2m_1+1)(2\theta(n-2,m_1)+2^{n-1})\vspace{1.5ex}\\
&=&(2m_1+1)\theta(n,2^{n-1},m_1)\\
&=&\sum\limits_{i=2^{n-3}-m_1+1}^{2^{n-3}+m_1+1}\theta(n,2^{n-1},t_i^{(4)}).
	\end{array}$$
	This implies
	$$\sum_{i=1}^{2^{n-2}+1}\theta(n,2^{n-1},t_i^{(2)})\ge
	\sum_{i=1}^{2^{n-2}+1}\theta(n,2^{n-1},t_i^{(4)})\ge\sum_{i=1}^{2^{n-2}+1}\theta(n,2^{n-1},s_i).$$
	
Similarly,
$$\sum_{i=2^{n-2}+1}^{2^{n-1}}\theta(n,2^{n-1},t_i^{(2)})\ge
\sum_{i=2^{n-2}+1}^{2^{n-1}}\theta(n,2^{n-1},s_i).$$
This proves the theorem.
\end{proof}
Note that, in Figure \ref{gr-t}, we draw the graph of $(t_i)_{i=1}^{2^{n-1}}$ and $(t_i^{(j)})_{i=1}^{2^{n-1}}$ for a sample embedding
$\eta:Q_6\rightarrow C_{64}$ with $(\eta(v))_{v\in\{0,1\}^6}=$
$$\begin{array}{l}
(0, 1, 3, 2, 6, 7, 5, 4, 12, 13, 15, 14, 10, 11, 9, 8, 24, 25,  \\
\ 27, 26, 30, 31, 29, 28, 20, 21, 23, 22, 18, 19, 17, 16, 48, 55, \\
\ 53, 52, 60, 61, 63, 62, 58, 50, 54, 40, 41, 43, 56, 59, 34, 42,  \\
\ 39, 46, 36, 37, 57, 49, 51, 38, 47, 45, 44, 35, 33, 32)\end{array}$$
where $\eta(v)$ are list in lexicographic order of $v\in\{0,1\}^6$.

\medskip

Acknowledgements:
The authors are supported by the National Natural Science Foundation of China, No.11871098.

\section{Appendix}
The following are the Mathematica codes to verify \eqref{v1} and \eqref{v2}.

\begin{lstlisting}
f[x_]:=3/4-64(x-1/2)^2/7;
d[n_,x_]:=If[n==1,0.5-Abs[x-0.5],d[n-1,2x-Floor[2x]]/2];
m[n_,x_]:=Sum[d[k, x],{k, 1, n}];
a[n_,x_,y_]:=m[n,y]+m[n,x-y]-2y;

min=1.;n=12;nn=2^n;
For[i=819,i<=nn/2,i++,
s=Max[f[i/nn],f[(i+1)/nn]];
For[j=171,j<=619,j++,min=Min[a[n,i/nn,j/nn]-s, min]];
];
Print[min]//The outputs is 0.

min=1.;n=12;nn=2^n;
For[i=819,i<=1963,i++,min=Min[a[n,i/nn,170/nn]-f[(i+1)/nn],min]];
Print[min]//The output is 0.003.
\end{lstlisting}

\end{document}